%% file: coverings-by-graphs-bounded-by-certain-parameters.tex
\documentclass[]{article}

\title{Cover numbers by graph families bounded by certain graph parameters}

\author{Anna Gujgiczer\thanks{HUN-REN Alfr\'ed R\'enyi Institute of Mathematics, Budapest, Hungary} \thanks{MTA-HUN-REN RI Lend\"ulet ``Momentum" Arithmetic Combinatorics Research Group, Budapest, Hungary} \and Márton Marits\thanks{Yokohama National University, Yokohama, Japan} \and Kenta Ozeki\thanks{Yokohama National University, Yokohama, Japan}}

\date{\today}
\usepackage[yyyymmdd]{datetime}

\usepackage{amsmath}
\usepackage{amsthm, amssymb}
\usepackage{graphicx}

\usepackage{xcolor}
\usepackage{tikz}
\usetikzlibrary{calc}

\usepackage{caption}
\usepackage{subcaption}
\usepackage{subcaption}
\usepackage{float}

\newcommand{\N}{\mathbb{N}}

\newcommand{\Q}{\mathbb{Q}}
\newcommand{\Z}{\mathbb{Z}}

\usepackage{bm}

\newcommand{\eps}{\varepsilon}
\renewcommand{\rho}{\varrho}
\renewcommand{\phi}{\varphi}

\let\existstemp\exists
\let\foralltemp\forall
\renewcommand*{\exists}{\ \existstemp}
\renewcommand*{\forall}{\ \foralltemp}

\theoremstyle{plain}
\newtheorem{thm}{Theorem}[section]

\newtheorem{cor}[thm]{Corollary}

\newtheorem{question}[thm]{Question}

\usepackage{hyperref}

\usepackage{fontspec}

\begin{document}

\maketitle

\begin{abstract}
	
	The \textit{cover number} of a graph by a graph class $\mathcal P$ is the least number of $\mathcal P$-graphs necessary to cover its edges. A classical theorem of Harary, Hsu and Miller gives an exact formula for the cover number by the class of graphs with chromatic number at most $k$. We investigate analogous questions for the case of the fractional chromatic number $\chi_f$ and the local chromatic number $\psi$.
	
	We prove that an analogous formula cannot hold in the case of the cover number by graphs of fractional chromatic number at most $\beta$, and find a lower and an upper bound, that gives rise to interesting asymptotic questions. We also investigate this cover number for small specific graphs. In the case of the cover number by graphs with local chromatic number at most $k$, we find an upper bound in terms of $\psi$, and a lower bound in terms of $\omega$. 
	
\end{abstract} 

\textbf{Keywords}: Graph edge covering, fractional chromatic number, local chromatic number.

\section{Introduction} \label{sec:intro}



A \textit{covering} of a graph $G$ is a set of subgraphs of $G$ such that each edge of $G$ is contained in at least one of these subgraphs. The \textit{cover number} of $G$ by a graph class $\mathcal P$ is the smallest number $k$ such that $G$ can be covered by $k$ subgraphs belonging to $\mathcal P$. 


A \textit{proper coloring} of a graph $G$ with $n$ colors is a function $V(G) \to \{1, \dots, n\}$ that maps neighboring vertices of $G$ to different colors. The \textit{chromatic number} $\chi(G)$ of a graph $G$ is the smallest $n$ such that it admits a proper coloring with $n$ colors. A \textit{clique} of size $n$ in a graph $G$ is subgraph of $G$ isomorphic to the complete graph $K_n$. The \textit{clique number} $\omega(G)$ of a graph $G$ is the largest $n$ such that it contains a clique of size $n$. We delay other definitions until Section \ref{sec:prelim}.

The starting point of this research is the following theorem by Harary, Hsu and Miller \cite{HararyHsuMiller1977}. The notation $\{\chi \leq k\}$ here denotes the set of graphs with chromatic number at most $k$, i.e. it is an abbreviation of the set $\{ G \mid \chi(G) \leq k\}$. Similarly, in the rest of this paper, we will consider cover numbers by several graph classes of the form $\mathcal P = \{G \mid L(G)\}$, where $L$ is a formula that contains constants and graph parameters. Such graph classes will always be abbreviated as $\{L\}$, following the notation of \cite{Marits2026}.

\begin{thm}[Harary, Hsu, Miller 1977 \cite{HararyHsuMiller1977}] \label{thm:hhm}
	For all graphs $G$, and all positive integers $k$ we have \[
		c_{\{\chi \leq k\}}(G) = \left\lceil \frac{\log \chi(G)}{\log k} \right\rceil
	\]
\end{thm}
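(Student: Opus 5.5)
The plan is to prove the two inequalities separately, using one key fact: if $G$ is the edge-union of subgraphs $H_1,\dots,H_m$ on the same vertex set, then $\chi(G)\le \prod_{i=1}^m \chi(H_i)$. Here $k\ge 2$ is implicit, since $\log 1 = 0$; for $k=1$ only edgeless graphs are coverable. We may also assume $\chi(G)\ge 2$, since for an edgeless $G$ both sides are $0$.

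\emph{Lower bound.} Suppose $G$ is covered by $m$ graphs $H_1,\dots,H_m$ with $\chi(H_i)\le k$. Extend each $H_i$ to the full vertex set $V(G)$ by adding isolated vertices; this does not change its chromatic number. Fix proper colorings $c_i\colon V(G)\to\{1,\dots,k\}$ and define the product coloring $c(v)=(c_1(v),\dots,c_m(v))$. Every edge $uv$ of $G$ lies in some $H_i$, so $c_i(u)\ne c_i(v)$ and hence $c(u)\neq c(v)$. Thus $c$ is a proper coloring with at most $k^m$ colors, giving $\chi(G)\le k^m$. Taking logarithms yields $m\ge \log\chi(G)/\log k$, and since $m$ is an integer, $m\ge\lceil \log\chi(G)/\log k\rceil$.

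\emph{Upper bound.} Let $n=\chi(G)$ and $m=\lceil \log n/\log k\rceil$, so that $n\le k^m$. Fix a proper coloring $c\colon V(G)\to\{0,\dots,n-1\}$ and write each color in base $k$ with exactly $m$ digits, which is possible because $n-1<k^m$. For $i=1,\dots,m$, let $H_i$ be the spanning subgraph of $G$ consisting of the edges $uv$ for which the $i$-th digits of $c(u)$ and $c(v)$ differ. Then the $i$-th digit is a proper coloring of $H_i$ with $k$ colors, so $\chi(H_i)\le k$. Every edge $uv$ of $G$ has $c(u)\ne c(v)$, so the two colors differ in some digit, and the edge lies in some $H_i$. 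Hence the $H_i$ cover $G$, and $c_{\{\chi\le k\}}(G)\le m$.

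Neither step is a real obstacle. The only point needing care is that the covering subgraphs may not span $V(G)$, and adding isolated vertices handles this. The remaining care is with edge cases: $\chi(G)=1$, where the cover number is $0$, and $\chi(G)$ an exact power of $k$, where the digit encoding still fits because $n-1<k^m$.
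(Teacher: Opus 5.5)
Your proof is correct, including the edge cases. The paper gives no proof of this statement: it quotes the result from Harary, Hsu and Miller and uses it as a black box, so there is no internal argument to match.

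The closest comparison is the paper's proof of $c_{\{\chi_f\leq\beta\}}(G)\geq\lceil\log\chi_f(G)/\log\beta\rceil$ in Section~\ref{sec:fractional}. Your lower bound is exactly its special case $q=1$. There, each vertex receives the set of all tuples $(x_1,\dots,x_c)$ with $x_i\in\phi_i(v)$, which is a vertex of $KG(p^c,q^c)$. When each $\phi_i(v)$ is a single color, this collapses to your product coloring with $k^m$ colors.

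Your upper bound, which splits the edges according to which base-$k$ digit of the color differs, is the standard construction. It is the pull-back along $G\to K_{k^m}$ of a covering of $K_{k^m}$ by $m$ graphs homomorphic to $K_k$, in the spirit of Theorem~\ref{thm:graph-class-defd-by-homomorphism}. This is precisely the step with no fractional counterpart. For example, $KG(25,4)$ has $\chi_f=(5/2)^2$, yet it contains $K_6$, so by the paper's Ramsey argument it is not a union of two $5/2$-colorable graphs. That is why the paper's fractional upper bound must pass through $\lfloor\beta\rfloor$ and carry an extra factor.

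Your convention that an edgeless graph has cover number $0$ (the empty covering) is the one under which the formula holds at $\chi(G)=1$. The paper elsewhere asserts that all cover numbers are at least one. Under that convention the statement should be read for graphs with at least one edge, and with $k\geq 2$, as you note.
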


This theorem gives an exact formula for the cover number $c_{\{\chi \leq k\}}(G)$ in terms of $\chi(G)$. Recently, a similar exact formula has been found in \cite{Marits2026}, replacing the constant $k$ by the clique number $\omega$. The following can be said about the graphs class $\{\chi = \omega\}$. This graph class is closely related to the well-researched family of perfect graphs.

\begin{thm}[Marits 2026 \cite{Marits2026}] \label{thm:m-omega}
	For all graphs $G$, we have \[
		c_{\{\chi = \omega\}}(G) = \left\lceil \frac{\log \chi(G)}{\log \omega(G)} \right\rceil
	\]
\end{thm}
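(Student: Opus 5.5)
We prove the two inequalities separately; throughout write $\omega=\omega(G)$ and $\chi=\chi(G)$. The degenerate cases come first. If $\omega=1$, then $G$ is edgeless: $c=0$ and $\chi=1$. If $\omega\ge 2$ and $\chi=\omega$, the graph $G$ itself is in the class, so $c=1$, which matches the formula. Now assume $\chi>\omega\ge 2$.

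\emph{Lower bound.} Suppose $G=H_1\cup\dots\cup H_m$ as an edge covering, with $\chi(H_i)=\omega(H_i)$ for every $i$. Each $H_i$ is a subgraph of $G$, so $\omega(H_i)\le\omega$ and hence $\chi(H_i)\le\omega$. The standard product-coloring argument underlying Theorem~\ref{thm:hhm} then applies. Color $v$ by the tuple $(c_1(v),\dots,c_m(v))$, where $c_i$ is an optimal coloring of $H_i$. Every edge lies in some $H_i$, so its endpoints differ in the $i$-th coordinate. This gives a proper coloring of $G$ with at most $\omega^m$ colors, so $\chi\le\omega^m$, that is, $m\ge\lceil\log\chi/\log\omega\rceil$. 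Equivalently, we may simply invoke the lower-bound direction of Theorem~\ref{thm:hhm} with $k=\omega$.

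\emph{Upper bound.} Let $m=\lceil\log\chi/\log\omega\rceil$. Theorem~\ref{thm:hhm} with $k=\omega$ gives a covering by $m$ subgraphs $H_1,\dots,H_m$ with $\chi(H_i)\le\omega$. These need not satisfy $\chi(H_i)=\omega(H_i)$, so we modify each one. The freedom we use is that a covering subgraph may contain extra edges of $G$ beyond those it is needed for. Fix a maximum clique $Q$ of $G$ with $|Q|=\omega$.

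For each $i$, we aim to replace $H_i$ by $H_i'=H_i\cup E(Q)$, the graph $H_i$ with the edges of the clique $Q$ added. Then $\omega(H_i')=\omega$, and we need $\chi(H_i')\le\omega$. This requires an $\omega$-coloring of $H_i$ that is injective on $Q$. To arrange this, we choose the covering carefully rather than taking an arbitrary one.

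Take an optimal coloring $f:V(G)\to\{0,\dots,\chi-1\}$ and extend it to the space $\{0,\dots,\omega-1\}^m\supseteq$ (an injection of) $\{0,\dots,\chi-1\}$, as in the proof of Theorem~\ref{thm:hhm}. Let $H_i$ consist of the edges whose endpoints' labels differ in coordinate $i$; then coordinate $i$ gives the $\omega$-coloring of $H_i$. The vertices of $Q$ receive $\omega$ distinct colors of $f$. We can choose the injection of colors into $\{0,\dots,\omega-1\}^m$ so that these $\omega$ colors map to the diagonal tuples $(j,j,\dots,j)$ for $j=0,\dots,\omega-1$; this is possible since $\chi\ge\omega$ and the target space has $\omega^m\ge\chi$ elements. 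With this choice, $Q$ is rainbow in every coordinate, so every edge of $Q$ already lies in every $H_i$. Each $H_i$ then contains $Q$ and is properly $\omega$-colored, so $\omega(H_i)=\chi(H_i)=\omega$. (If some $H_i$ were somehow empty the formula would be unaffected, but here each $H_i$ contains $Q$, so none is empty.)

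The main point needing care is exactly this choice of injection, which makes the maximum clique rainbow in every coordinate simultaneously. Once that is in place, the rest is routine.
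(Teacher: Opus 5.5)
Your proof is correct. The paper gives no proof of this statement of its own; it is quoted from Marits (2026). So there is no internal argument to compare against, and what you give is a self-contained derivation from Theorem~\ref{thm:hhm}.

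\textbf{Lower bound.} This is the Harary--Hsu--Miller product coloring. The only extra observation needed is that each covering graph is a subgraph of $G$, so $\chi(H_i)=\omega(H_i)\le\omega(G)$.

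\textbf{Upper bound.} This is where the real content lies. The class $\{\chi=\omega\}$ is not closed under taking subgraphs: $C_5$ together with a disjoint triangle is in the class, but $C_5$ is not. So an arbitrary covering by $\omega(G)$-colorable graphs does not suffice, and you correctly identify this. Your device handles it: send the $f$-colors of a maximum clique $Q$ to the diagonal tuples $(j,\dots,j)$. This makes $Q$ rainbow in every coordinate, so every coordinate class $H_i$ contains a copy of $K_{\omega(G)}$ while being properly $\omega(G)$-colored. That forces $\omega(H_i)=\chi(H_i)=\omega(G)$. The counting needed for the injection, $\omega^m\ge\chi$ with the diagonal occupying exactly $\omega$ of the $\omega^m$ tuples, is right.

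\textbf{Minor remarks.}
\begin{itemize}
\item The intermediate plan of replacing $H_i$ by $H_i\cup E(Q)$ is superfluous, since with the diagonal injection $Q\subseteq H_i$ already. Note also that the resulting covering is genuinely not a partition, because the edges of $Q$ lie in every $H_i$. That is fine here, since the class is not subgraph-closed.
\item The separate case $\chi=\omega$ is unnecessary. Your construction with $m=1$ returns $H_1=G$.
\item For $\omega=1$ the right-hand side is $0/0$, so that case lies outside the formula. The paper's convention is that cover numbers are at least one, so writing $c=0$ there is a choice of convention rather than a consequence.
\end{itemize}
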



In light of this result, our goal is to find analogs of Theorem \ref{thm:hhm}, by replacing the ordinary chromatic number with other graph parameters. We will consider the fractional chromatic number $\chi_f$ and the local chromatic number $\psi$, both of which we will define in Section \ref{sec:prelim}.

We have obtained results for the cases of both the fractional and the local chromatic number. Results with the fractional chromatic number are explored in Section \ref{sec:fractional}. The fractional chromatic number can take non-integer values, therefore, we investigate the graph class $\{\chi_f \leq \beta\}$, for some rational number $\beta$. We find the following upper and lower bound.

\begin{thm} \label{thm:INTRO_frac-upper-lower}
	Let $G$ be any graph, and $\beta > 2$ a rational number. Then \[
			\left\lceil \frac{\log \chi_f(G)}{\log \beta} \right\rceil \leq c_{\{\chi_f \leq \beta\}}(G) \leq \left\lceil \frac{\log \chi(G)}{\log \omega(G)} \right\rceil \cdot \left\lceil \frac{\log\chi_f(G)}{\log \left\lfloor \beta \right\rfloor}  \right\rceil
	\]
\end{thm}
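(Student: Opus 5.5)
The lower bound will follow from submultiplicativity of $\chi_f$ under edge-unions. The upper bound will come from composing Theorem~\ref{thm:m-omega} with Theorem~\ref{thm:hhm}.

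\emph{Lower bound.} I first prove that if $G_1,G_2$ are graphs on the common vertex set $V(G)$ with $E(G)=E(G_1)\cup E(G_2)$, then $\chi_f(G)\le \chi_f(G_1)\chi_f(G_2)$. Take optimal fractional colorings, i.e.\ weights $w_1$ on independent sets of $G_1$ and $w_2$ on independent sets of $G_2$. Each vertex must be covered with total weight at least $1$, and the total weights are $\chi_f(G_1)$ and $\chi_f(G_2)$. Assign weight $w_1(I)w_2(J)$ to each set $I\cap J$. Such a set is independent in $G$, since any edge of $G$ lies in $G_1$ or in $G_2$. A vertex $v$ receives total weight
\[
\Bigl(\sum_{I\ni v}w_1(I)\Bigr)\Bigl(\sum_{J\ni v}w_2(J)\Bigr)\ge 1,
\]
and the total weight is the product $\chi_f(G_1)\chi_f(G_2)$. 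Covering subgraphs may be extended to spanning subgraphs without changing $\chi_f$, so this applies to any cover.

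By induction, a cover of $G$ by $c$ graphs with $\chi_f\le\beta$ gives $\chi_f(G)\le\beta^c$. Hence $c\ge \log\chi_f(G)/\log\beta$, and since $c$ is an integer, $c\ge\lceil\log\chi_f(G)/\log\beta\rceil$.

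\emph{Upper bound.} First dispose of the degenerate case $\omega(G)\le 1$. Then $G$ is edgeless and both sides are $0$, using the convention that an empty cover suffices. Otherwise, Theorem~\ref{thm:m-omega} covers $G$ by $c_1=\lceil\log\chi(G)/\log\omega(G)\rceil$ subgraphs $H_1,\dots,H_{c_1}$, each satisfying $\chi(H_i)=\omega(H_i)$.

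For each $i$ we have the chain
\[
\chi(H_i)=\omega(H_i)\le\omega(G)\le\chi_f(G).
\]
Since $\beta>2$, we have $\lfloor\beta\rfloor\ge 2$, so Theorem~\ref{thm:hhm} applies with $k=\lfloor\beta\rfloor$. It covers $H_i$ by
\[
\left\lceil\frac{\log\chi(H_i)}{\log\lfloor\beta\rfloor}\right\rceil\le\left\lceil\frac{\log\chi_f(G)}{\log\lfloor\beta\rfloor}\right\rceil
\]
graphs with $\chi\le\lfloor\beta\rfloor$. Each of these has $\chi_f\le\chi\le\lfloor\beta\rfloor\le\beta$.

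Taking the union of these covers over all $i$ gives a cover of $G$ by at most the stated product of graphs in $\{\chi_f\le\beta\}$.

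The only step requiring genuine work is the product construction in the lower bound, namely checking that $I\cap J$ is independent and that the weights multiply. The key observation in the upper bound is that $\chi=\omega$ forces the chromatic number of each piece below $\chi_f(G)$.
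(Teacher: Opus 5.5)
Your proof is correct and follows essentially the paper's route: the lower bound is the same product-of-fractional-colorings argument (phrased with LP weights instead of products of Kneser color sets, which avoids having to choose a common target $KG(p,q)$), and the upper bound composes Theorem~\ref{thm:m-omega} with Theorem~\ref{thm:hhm} as the paper does, only bypassing the intermediate class $\{\chi=\chi_f\}$. Your chain $\chi(H_i)=\omega(H_i)\le\omega(G)$ also shows that the second factor can use $\omega(G)$ in place of $\chi_f(G)$, which gives a slightly stronger bound.
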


This theorem is a partial analog of Theorem \ref{thm:hhm}, in fact, in the lower bound we find an exact analog of the formula from that theorem. However, we find that there exist graphs whose cover number by $\{\chi_f \leq \beta\}$ must be strictly bigger than this lower bound. In Section \ref{sec:specificcases}, we investigate the graph parameter $c_{\{\chi_f \leq \beta\}}(G)$ for specific, small choices of $\beta$ and $G$. We conclude that no general formula containing only $\chi_f(G)$ can exist for $c_{\{\chi_f \leq \beta\}}(G)$. 

In Theorem \ref{thm:INTRO_frac-upper-lower}, we have a lower bound on $c_{\{\chi_f \leq \beta\}}$ containing the fractional chromatic number, and an upper bound containing both the fractional chromatic number and the ordinary chromatic number. This means that our bounds are not tight for graph families where the fractional and ordinary chromatic numbers are far from each other. We will investigate two families of this type in Section \ref{sec:eightconj}.

Regarding the local chromatic number, we investigate the graph class $\{\psi \leq k\}$, for some integer $k$. We find the following upper and lower bound. Detailed information on the local chromatic number $\psi$ and the graph parameter $c_{\{\psi \leq k\}}$ is in Section \ref{sec:local}.

\begin{thm} \label{thm:INTRO_local-upper}
	For all graphs $G$ and integers $k \geq 3$, we have \[
		f_k^{-1}(\omega(G)) \leq c_{\{\psi \leq k\}}(G) \leq \left\lceil \frac{\psi(G) - 1}{\left\lfloor \frac{k-1}{2} \right \rfloor} \right\rceil \cdot \left\lceil \frac{\psi(G) - 1}{\left\lceil \frac{k-1}{2} \right \rceil} \right\rceil
	\] where $f_k(x)$ is a function of the order $O(x! \cdot k^x)$.
\end{thm}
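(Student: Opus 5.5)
The upper bound comes from fixing an optimal local coloring of $G$ and distributing its edges among $ab$ subgraphs, where
\[
a=\left\lceil \frac{\psi(G)-1}{\lfloor (k-1)/2\rfloor}\right\rceil \quad\text{and}\quad b=\left\lceil \frac{\psi(G)-1}{\lceil (k-1)/2\rceil}\right\rceil .
\]
The lower bound comes from a recursive Ramsey-type count inside a largest clique. I use the standard definition: $\psi(G)$ is the minimum, over proper colorings $c$ of $G$, of the maximum over vertices $v$ of the number of colors appearing on the closed neighbourhood $N[v]$.

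\textbf{Upper bound.} Fix a proper coloring $c$ of $G$ with integer colors in which every $N[v]$ sees at most $\psi(G)$ colors. Then for every vertex $u$ the set $C(u)=c(N(u))$ has at most $\psi(G)-1$ elements.
\begin{enumerate}
\item For each vertex $u$, partition $C(u)$ in two ways, independently of each other. The first partition $C(u)=A_1(u)\cup\dots\cup A_a(u)$ has parts of size at most $\lfloor (k-1)/2\rfloor$. The second partition $C(u)=B_1(u)\cup\dots\cup B_b(u)$ has parts of size at most $\lceil (k-1)/2\rceil$. Both partitions exist by the choice of $a,b$, and $\lfloor (k-1)/2\rfloor\ge 1$ because $k\ge 3$.
\item Take an edge $uv$ and orient it so that $c(u)<c(v)$. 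Put $uv$ into the subgraph $H_{i,j}$, where $i$ is the index with $c(v)\in A_i(u)$ and $j$ is the index with $c(u)\in B_j(v)$. This gives $ab$ subgraphs covering all edges.
\item Restrict $c$ to $H_{i,j}$ and look at a vertex $x$.
\begin{itemize}
\item A neighbour $y$ of $x$ in $H_{i,j}$ with $c(y)>c(x)$ satisfies $c(y)\in A_i(x)$.
\item A neighbour $y$ with $c(y)<c(x)$ satisfies $c(y)\in B_j(x)$.
\end{itemize}
So the closed neighbourhood of $x$ in $H_{i,j}$ sees at most $1+\lfloor (k-1)/2\rfloor+\lceil (k-1)/2\rceil=k$ colors. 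Hence $\psi(H_{i,j})\le k$.
\end{enumerate}
The key point is splitting each vertex's budget of $k-1$ neighbour colors between "higher" and "lower" neighbours, so that each edge is charged to one group at each endpoint independently.

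\textbf{Lower bound.} Define $f_k(t)$ as the largest $n$ such that $K_n$ can be covered by $t$ graphs with $\psi\le k$, and set $f_k^{-1}(\omega)=\min\{t : f_k(t)\ge\omega\}$. Restricting a covering of $G$ to a clique of size $\omega(G)$ covers $K_{\omega(G)}$, so the stated lower bound holds. It remains to show $f_k(t)$ is finite, with $f_k(0)=1$ and
\[
f_k(t)\le 1+t(k-1)f_k(t-1).
\]
This recursion unrolls to $O(t!\,k^t)$.

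To prove the recursion, take a covering $H_1,\dots,H_t$ of $K_n$, each $H_s$ with $\psi(H_s)\le k$, and fix a vertex $v$.
\begin{enumerate}
\item Every other vertex is a neighbour of $v$ in some $H_s$.
\item In a local coloring of $H_s$, the $H_s$-neighbours of $v$ use at most $k-1$ colors. So they split into at most $k-1$ color classes, each independent in $H_s$.
\item The clique on one such class is therefore covered by the remaining $t-1$ graphs, so the class has at most $f_k(t-1)$ vertices.
\item Summing over $s$ and over the classes gives the recursion.
\end{enumerate}

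The main thing to get right is this finiteness step. Graphs with $\psi\le k$ can have unbounded chromatic number, so a naive product-coloring argument as in Theorem~\ref{thm:hhm} fails. Only the local structure at a single vertex, used recursively, gives the bound.
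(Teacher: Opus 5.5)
Your proof is correct and follows the paper's approach. For the upper bound, the paper builds its $E_{i,j}$ and $\hat{E}_{i,j}$ covering on $U(m,r)$ and pulls it back to $G$ along $v \mapsto (c(v), c(N(v)))$; your construction is exactly this, with arbitrary partitions of $c(N(u))$ in place of blocks of consecutive ranks. Your lower bound is the paper's recursive count of the neighbours of a fixed vertex of a largest clique, and your recursion $f_k(t) \le 1 + t(k-1) f_k(t-1)$ is slightly more careful than the paper's $f_k(c) = c(k-1) f_k(c-1)$, which forgets to count the fixed vertex itself (this does not affect the $O(x!\cdot k^x)$ order).
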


The upper bound of \ref{thm:INTRO_local-upper} is analogous to Theorem \ref{thm:hhm} in the sense that it only depends on $\psi(G)$ -- the upper bound on the cover number depends on the same graph parameter as the condition on the covering graphs. The lower bound, however, does not depend on $\psi(G)$ directly, only on $\omega(G)$.

Theorems \ref{thm:INTRO_frac-upper-lower} and \ref{thm:INTRO_local-upper} give rise to interesting asymptotic questions, as the upper and lower bounds are asymptotically different. It is well-known from the Lovász-Kneser theorem \cite{Lovasz1978} that graphs with constant fractional chromatic number can attain arbitrarily large values for their ordinary chromatic number. For a family of graphs $G_n$ with the property that $\chi_f(G_n)$ is constant and $\chi(G_n) \to \infty$, the upper bound of Theorem \ref{thm:INTRO_frac-upper-lower} goes to infinity, while the lower bound is a constant. Therefore, in Section \ref{sec:eightconj}, we investigate whether $c_{\{\chi_f \leq \beta\}}(G_n)$ goes to infinity, with two natural choices for $G_n$. We also investigate the behavior of other relevant cover numbers for our choices of $G_n$.

The choices of $G_n$ we consider are the following. First, the sequence $KG(\alpha k, k)$ of Kneser graphs with constant fractional chromatic number $\alpha \in \Q$, and second, the sequence $U(k, \alpha)$ with constant local chromatic number (and constant fractional chromatic number) $\alpha \in \N$. The precise definitions of these sequences of graphs are delayed until later sections.

\section{Preliminaries} \label{sec:prelim}

All graphs in this work are undirected and simple. 
A $G \to H$ \textit{homomorphism} between the graphs $G$ and $H$ is a function $\phi\colon V(G) \to V(H)$ that maps each edge of $G$ to an edge of $H$. We write $G \to H$ if such a homomorphism exists. All logarithms are to the base two unless stated otherwise.

Let $G$ be a graph, and let $\mathcal P$ be a family of graphs. We say that $G$ is \textit{covered} by the graphs $G_1, \dots, G_c \in \mathcal P$ if $E(G) = E(G_1) \cup \dots \cup E(G_c)$ and each $G_i$ is a subgraph of $G$. We call a covering a \textit{partition} or \textit{decomposition} if the covering graphs are pairwise edge-disjoint. Most choices of $\mathcal P$ we consider in this work are closed with respect to taking subgraphs, in which case we can assume any covering by $\mathcal P$-graphs to be a partition. All choices of $\mathcal P$ in this work are closed with respect to the addition or removal of isolated vertices. Because of this, we can always assume that in any covering $E(G) = E(G_1) \cup \dots \cup E(G_c)$, the vertex set of each $G_i$ is $V(G)$. 

The \textit{cover number} of $G$ by $\mathcal P$ is the smallest number $c$ such that there exists a covering of $G$ by $c$ graphs $G_1, \dots, G_c \in \mathcal P$. It is easy to see that for any graph class $\mathcal P$ with $K_2 \in \mathcal P$, this is a well-defined graph parameter. In this paper, all graph classes are assumed to contain $K_2$. We denote the cover number of $G$ by $\mathcal P$ as $c_{\mathcal P}(G)$. Cover numbers were first studied in \cite{Harary1970}, and they are surveyed in \cite{Pyber1991}.

The following assertion is an easy consequence of the definition.

\begin{thm} \label{thm:coverno-bijection}
	Let $\mathcal P, \mathcal Q$ be graph classes. Then the following are equivalent:
	\begin{enumerate}
		\item $\mathcal P \subseteq \mathcal Q$
		\item For all graphs $G$, $c_{\mathcal Q}(G) \leq c_{\mathcal P}(G)$
	\end{enumerate}
\end{thm}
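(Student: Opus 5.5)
The plan is to prove the two implications separately. The forward one is immediate from the definition, and the reverse one goes by contraposition using a single witness graph.

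For (1)$\Rightarrow$(2), assume $\mathcal P \subseteq \mathcal Q$ and fix a graph $G$. Take an optimal covering $E(G) = E(G_1) \cup \dots \cup E(G_c)$ with $G_i \in \mathcal P$ and $c = c_{\mathcal P}(G)$. Each $G_i$ also lies in $\mathcal Q$, so the same family is a covering of $G$ by $\mathcal Q$-graphs. Hence $c_{\mathcal Q}(G) \leq c_{\mathcal P}(G)$. No further work is needed here.

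For (2)$\Rightarrow$(1), I argue the contrapositive. Suppose there is a graph $H \in \mathcal P \setminus \mathcal Q$; I want a graph whose $\mathcal Q$-cover number exceeds its $\mathcal P$-cover number, and the natural candidate is $H$ itself. Since $H$ is a subgraph of itself and $E(H) = E(H)$, the single graph $H$ covers $H$, so $c_{\mathcal P}(H) \leq 1$. Now suppose $c_{\mathcal Q}(H) \leq 1$. If $H$ has at least one edge, this means some $G_1 \in \mathcal Q$ is a subgraph of $H$ with $E(G_1) = E(H)$. Then $G_1$ and $H$ differ only by isolated vertices. By the standing convention that all graph classes are closed under adding and removing isolated vertices, $H \in \mathcal Q$, which is a contradiction. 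Therefore $c_{\mathcal Q}(H) \geq 2 > 1 \geq c_{\mathcal P}(H)$, so (2) fails.

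The only delicate point is the degenerate case where $H$ has no edges, since then both cover numbers are $0$ and $H$ witnesses nothing. I handle this through the paper's conventions. Every class contains $K_2$ and is closed under removing and adding isolated vertices, so after deleting both edges' endpoints' edge we cannot directly get edgeless graphs; instead I would note explicitly that the statement is meant up to isolated vertices and edgeless graphs. Concretely, the statement should be read for classes considered modulo edgeless graphs: either every class is taken to contain all edgeless graphs, or the inclusion in (1) is interpreted only for graphs with at least one edge. With that reading, a witness $H \in \mathcal P \setminus \mathcal Q$ can be taken to have an edge, and the argument above goes through. I expect stating this convention cleanly to be the only real obstacle; the rest is bookkeeping.
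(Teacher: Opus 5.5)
Your proof is correct and is essentially the paper's argument: the forward direction reuses an optimal $\mathcal P$-covering as a $\mathcal Q$-covering, and your contrapositive for the reverse direction is a careful version of the paper's identity $\mathcal P=\{G \mid c_{\mathcal P}(G)=1\}$, with the needed closure under adding isolated vertices made explicit. The edgeless-graph worry comes from allowing coverings by zero graphs; the paper avoids it by declaring all cover numbers to be at least one, and under that convention your same argument also handles edgeless $H$, since the single covering graph plus isolated vertices gives $H$.
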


\begin{proof}
	The first assertion implies the second since every covering by $\mathcal Q$-graphs is also a covering by $\mathcal P$-graphs. For the opposite direction, consider that $\mathcal P = \{ G \mid c_{\mathcal P}(G) = 1\}$, which is obviously contained in $\{ G \mid c_{\mathcal Q}(G) = 1\} = \mathcal Q$. (Since all cover numbers are at least one.)
\end{proof}

In our study, we focus on cover numbers by graph classes $\mathcal P$ where membership in $\mathcal P$ is defined by the existence of a homomorphism into a member of some graph family. For example, in Theorem \ref{thm:hhm}, we see the graph class $\{\chi \leq k\}$, which can be thought of as the set of graphs having a homomorphism into the complete graph $K_k$. The following can be said about graph classes $\mathcal P$ defined in this way.

\begin{thm} \label{thm:graph-class-defd-by-homomorphism}
	Let $\mathcal Z$ be a set of graphs, and $\mathcal P := \{ G \mid \exists Z \in \mathcal Z : G \to Z\}$. Then for all graphs $G, H$, if a homomorphism $G \to H$ exists, we have \[
		c_{\mathcal P}(G) \leq c_{\mathcal P}(H)
	\]
\end{thm}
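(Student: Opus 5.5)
The plan is to pull a cover of $H$ back along the homomorphism. Let $\phi\colon G \to H$ be a homomorphism and set $c := c_{\mathcal P}(H)$. Fix a covering $H_1, \dots, H_c \in \mathcal P$ of $H$; as noted in Section \ref{sec:prelim}, we may assume each $H_i$ is a spanning subgraph of $H$. For each $i$ choose $Z_i \in \mathcal Z$ and a homomorphism $\psi_i\colon H_i \to Z_i$, which exists because $H_i \in \mathcal P$. From these data we will build $c$ graphs in $\mathcal P$ that cover $G$.

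For each $i$, let $G_i$ be the spanning subgraph of $G$ with edge set
\[ E(G_i) := \{ uv \in E(G) \mid \phi(u)\phi(v) \in E(H_i) \}. \]
Each $G_i$ is a subgraph of $G$, as the definition of covering requires. These graphs cover $G$: for an edge $uv \in E(G)$, the image $\phi(u)\phi(v)$ is an edge of $H$ because $\phi$ is a homomorphism. Hence it lies in some $E(H_i)$, and then $uv \in E(G_i)$.

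It remains to check that each $G_i \in \mathcal P$. The restriction of $\phi$ to $V(G_i) = V(G)$ is a homomorphism $G_i \to H_i$. Indeed, $V(H_i) = V(H)$, so every vertex has an image in $H_i$, and by construction every edge of $G_i$ is sent to an edge of $H_i$. Composing with $\psi_i$ gives a homomorphism $\psi_i \circ \phi\colon G_i \to Z_i$, since a composition of homomorphisms is a homomorphism. Therefore $G_i \in \mathcal P$, and $c_{\mathcal P}(G) \le c = c_{\mathcal P}(H)$.

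The only step that needs any care is the vertex-set issue: $\phi$ must land inside $H_i$ even on vertices incident to no edge of $G_i$. This is why we take the $H_i$ to be spanning, which the paper allows because the classes are closed under adding isolated vertices. Alternatively, one can let $G_i$ have vertex set only the endpoints of its edges, and then $\phi$ maps these into endpoints of edges of $H_i$. Either way there is no real obstacle.
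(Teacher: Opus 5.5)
Your proof is correct and is the paper's argument: pull back an optimal cover of $H$ along $\phi$ via $E(G_i)=\{uv\in E(G)\mid \phi(u)\phi(v)\in E(H_i)\}$, then compose $\phi$ with the homomorphisms $H_i\to Z_i$. Your remark about taking the $H_i$ spanning, so that $\phi$ genuinely restricts to a homomorphism $G_i\to H_i$, makes explicit a point the paper's proof leaves implicit.
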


\begin{proof}
	Let $G, H$ be graphs with a homomorphism $\phi\colon V(G) \to V(H)$. Let $H_1, \dots, H_c$ be an optimal covering of $H$ with $c = c_{\mathcal P}(H)$-many graphs in $\mathcal P$. We claim that the subgraphs $G_i \subseteq G$ defined by \[
		E(G_i) := \{ e \in E(G) \mid \phi(e) \in E(H_i)\}
	\] are a valid covering of $G$ with $\mathcal P$-graphs. It is easy to see that this is indeed a covering of $G$, as $\phi$ must map each edge of $G$ to an edge of $H$, which is necessarily covered by at least one of the $H_i$-s.
	
	Suppose now that each $H_i$ has a homomorphism $\tau_i\colon H_i \to Z_i$ for some $Z_i \in \mathcal Z$. Then the composition $\tau_i \circ \phi$ is a homomorphism of $G_i$ to $Z_i$ for each $i$, therefore, each $G_i$ is in $\mathcal P$. This is therefore a covering of $G$ with $c_{\mathcal P}(H)$-many $\mathcal P$-graphs, proving the theorem.
\end{proof}

In this paper, we only study graph classes with the property described in Theorem \ref{thm:graph-class-defd-by-homomorphism}. We will use the term \textit{model graphs of $\mathcal P$} to refer to the elements of $\mathcal Z$ for these choices of $\mathcal P$.

The following corollary will be useful when studying upper bounds on graph classes defined in this way. 

\begin{cor} \label{cor:upper-bound-reasoning}
	Let $\mathcal P$ and $\mathcal Q$ be graph classes, both defined by a class of model graphs, as in Theorem \ref{thm:graph-class-defd-by-homomorphism}. Let $\mathcal W$ be the set of model graphs of $\mathcal Q$. Then \[
		\sup \left\{ c_{\mathcal P}(Q) \mid Q \in \mathcal Q \right\} = \sup \left\{ c_{\mathcal P}(Q) \mid Q \in \mathcal W \right\}
	\]
\end{cor}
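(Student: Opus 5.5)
The equality follows from two inequalities, each using one of the preceding results.

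For the easy direction ($\geq$), I would first check that $\mathcal W \subseteq \mathcal Q$. Every model graph $W \in \mathcal W$ admits the identity homomorphism $W \to W$, so by the definition $\mathcal Q = \{ G \mid \exists W \in \mathcal W : G \to W\}$ we get $W \in \mathcal Q$. Hence the supremum over $\mathcal W$ is a supremum over a subfamily of $\mathcal Q$, and so it is at most the supremum over $\mathcal Q$.

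For the direction ($\leq$), take any $Q \in \mathcal Q$. By the definition of $\mathcal Q$ there is some $W \in \mathcal W$ with a homomorphism $Q \to W$. Since $\mathcal P$ is also defined by a class of model graphs, Theorem \ref{thm:graph-class-defd-by-homomorphism} applies to $\mathcal P$ and gives $c_{\mathcal P}(Q) \leq c_{\mathcal P}(W)$. The right-hand side is at most $\sup \{ c_{\mathcal P}(W') \mid W' \in \mathcal W \}$. Taking the supremum over $Q \in \mathcal Q$ gives the inequality. This argument also covers the case where either supremum is infinite.

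I do not expect any real obstacle. The only point needing care is that the monotonicity is invoked for $\mathcal P$, so it is $\mathcal P$ that must be homomorphism-defined. The hypothesis on $\mathcal Q$ is used only to produce the map $Q \to W$ and the inclusion $\mathcal W \subseteq \mathcal Q$. I would also note that $c_{\mathcal P}(W)$ is well defined, since $K_2 \in \mathcal P$ by the paper's standing assumption.
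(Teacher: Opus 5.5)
Your proof is correct and is exactly the argument the paper intends: the corollary is stated without proof as an immediate consequence of Theorem \ref{thm:graph-class-defd-by-homomorphism}, applied to a homomorphism $Q \to W$ into a model graph of $\mathcal Q$. Your explicit check that $\mathcal W \subseteq \mathcal Q$ (via the identity map), which gives the reverse inequality, is a step the paper leaves implicit.
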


In other words, to find an upper bound on $c_{\mathcal P}$ among $\mathcal Q$-graphs, it is sufficient to find an upper bound on the value of $c_{\mathcal P}$ on the model graphs of $\mathcal Q$. As an example, taking $\mathcal P = \{ \chi \leq k\}$ and $\mathcal Q = \{ \chi \leq \ell\}$ for some integers $k < \ell$, in order to find an upper bound for the value of $c_{\{ \chi \leq k\}}(G)$ among graphs $G$ with $\chi(G) \leq \ell$, it is enough to find an upper bound on $c_{\{ \chi \leq k\}}(K_\ell)$, since $\{ \chi \leq \ell\}$ has the single model graph $K_\ell$.

Let us now move on to the definitions of the graph classes we study in this paper.

Let $k \in \Z$ be a parameter. The cover numbers by the graph classes $\{ \chi \leq k\}$ have already been studied, and their value is known exactly as per Theorem \ref{thm:hhm}. As discussed previously, the graph class $\{\chi \leq k\}$ has a single model graph, the complete graph $K_k$.


The \textit{Kneser graph} $KG(n,k)$ with parameters $n, k \in \N$, $n \geq 2k$, is the graph with vertex set $\binom{[n]}{k}$ and two vertices adjacent if and only if they are disjoint. The chromatic number of Kneser graphs is known from the classical result of Lovász \cite{Lovasz1978}, commonly referred to as the Lovász-Kneser theorem.

\begin{thm}[Lovász-Kneser \cite{Lovasz1978}] \label{thm:lovasz-kneser}
	For all $n \geq 2k$, we have \[
		\chi(KG(n,k)) = n - 2k + 2
	\]
\end{thm}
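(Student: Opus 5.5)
We prove $\chi(KG(n,k)) = n-2k+2$ by establishing $\chi(KG(n,k)) \leq n-2k+2$ with an explicit coloring and $\chi(KG(n,k)) \geq n-2k+2$ with a topological argument.

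\emph{Upper bound.} Color a $k$-subset $A \subseteq [n]$ by $\min(A)$ whenever $\min(A) \leq n-2k+1$. Every remaining set has all of its elements in $\{n-2k+2, \dots, n\}$, which has $2k-1$ elements, so any two such sets intersect. These sets therefore form an independent set and can share the single extra color $n-2k+2$. Two disjoint sets cannot have the same minimum, so sets with a common color $i \leq n-2k+1$ all contain $i$ and are pairwise non-adjacent. This gives a proper coloring with $n-2k+2$ colors.

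\emph{Lower bound.} I would follow Greene's simplification of Lovász's proof, which rests on the Lyusternik--Schnirelmann--Borsuk theorem: if $S^d$ is covered by $d+1$ sets, each open or closed, then one of them contains a pair of antipodal points. Set $d = n-2k$ and suppose a proper coloring $c$ uses colors $1,\dots,d+1$. Place the $n$ ground elements at points of $S^d \subset \mathbb{R}^{d+1}$ in general position, meaning no $d+1$ of them lie on a hyperplane through the origin. For $x \in S^d$, let $H(x)$ be the open hemisphere centered at $x$.
\begin{itemize}
\item For $i = 1, \dots, d$, let $U_i$ be the set of $x$ such that $H(x)$ contains some $k$-set of color $i$. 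Each $U_i$ is open.
\item Let $F = S^d \setminus (U_1 \cup \dots \cup U_d)$, which is closed.
\end{itemize}
These $d+1$ sets cover $S^d$, so one of them contains an antipodal pair $x, -x$.

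If the pair lies in some $U_i$, then $H(x)$ and $H(-x)$ are disjoint, so they contain disjoint $k$-sets of the same color $i$. These are adjacent vertices of $KG(n,k)$ with equal colors, which is impossible.

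If the pair lies in $F$, then neither $H(x)$ nor $H(-x)$ contains a $k$-set of colors $1, \dots, d$. Each of them contains at most $k-1$ points. Otherwise any $k$-subset inside it would be forced to have color $d+1$, and the same holds on the other side, so we would obtain disjoint $k$-sets both colored $d+1$, which is again impossible. Hence at least $n - 2(k-1) = d+2$ points lie on the great sphere $x^\perp$. But $x^\perp$ is a hyperplane through the origin, and general position allows at most $d$ points on it, a contradiction.

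Therefore $\chi(KG(n,k)) \geq d+2 = n-2k+2$, which together with the coloring proves the theorem.

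The main obstacle is the lower bound, specifically the use of the Borsuk--Ulam-type covering theorem. Since it is a classical cited result, I would invoke it rather than reprove it.
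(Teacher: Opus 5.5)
\textbf{The lower bound has a gap.} The paper does not prove this theorem; it only cites Lovász, so there is no in-paper argument to compare against. Your upper bound is the standard coloring and is correct.

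The problem is the case $x,-x\in F$. Membership in $F$ only says that $H(x)$ and $H(-x)$ contain no $k$-set of colors $1,\dots,d$. Combined with the fact that color $d+1$ is an independent set, this rules out only the situation where \emph{both} hemispheres contain at least $k$ points. It does not show that \emph{each} contains at most $k-1$.

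Concretely, for $k\ge 2$ the following configuration is consistent with everything you used:
\begin{itemize}
\item $H(-x)$ contains $k-1$ points.
\item $H(x)$ contains $k+1$ points, and all of its $k$-subsets have color $d+1$. Any two of these $k$-subsets share $k-1\ge 1$ elements, so this is a legitimate color class.
\item The remaining $n-2k=d$ points lie on $x^\perp$, which general position in $\mathbb{R}^{d+1}$ allows.
\end{itemize}
So no contradiction arises. The LSB theorem gives you no control over which of the $d+1$ sets contains the antipodal pair, so the argument does not go through.

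\textbf{Fix.} Raise the dimension by one, which is what Greene actually does. Take $d=n-2k+1$ and assume a proper coloring with $d$ colors (still $n-2k+1$ colors). Let $U_1,\dots,U_d$ correspond to \emph{all} the color classes, and let $F$ be their complement.

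Now $x,-x\in F$ means neither open hemisphere contains any $k$-set. So each contains at most $k-1$ points, and at least $n-2k+2=d+1$ points lie on $x^\perp$, contradicting general position. Your final count gave $d+2$ where only $d+1$ was needed; that spare point is a symptom of the dimension being off by one.

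Alternatively, keep $d=n-2k$ but replace general position by Gale's lemma. Choose the $n$ points on $S^{n-2k}$ so that every open hemisphere contains at least $k$ of them. Then the open sets $U_1,\dots,U_{d+1}$, one per color, already cover $S^d$, and no closed set is needed. This is Bárány's proof.
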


A \textit{fractional coloring} of a graph $G$ is a homomorphism from $G$ to a Kneser graph. The \textit{fractional chromatic number} of $G$ is the smallest value of $n/k$ with $G \to KG(n,k)$. This is well-defined for all finite graphs. For additional information, we refer to the textbook \cite{ScheinermanUllman2011book}. Occasionally, we will use the term \textit{$n/k$-colorable} to denote a graph with fractional chromatic number at most $n/k$.

Note that unlike the ordinary chromatic number, a graph $G$ having fractional chromatic number $n/k$ need not have a homomorphism to $KG(n,k)$. It might be the case that $G$ only admits a homomorphism to a larger Kneser graph; $KG(cn, ck)$ for some $c \in \N$. If $G$ only admits a homomorphism to a large Kneser graph, its ordinary chromatic number might still be arbitrarily large, as per Theorem \ref{thm:lovasz-kneser}.

Let $\beta \in \Q$ be a rational number parameter, with $\beta > 2$. We will study the graph classes $\{\chi_f \leq \beta\}$ in Section \ref{sec:fractional}. These graph classes, unlike their counterparts with the ordinary chromatic number, have an infinite set of model graphs; as each $KG(\beta k, k)$ with a suitable $k \in \Z$ is a model graph of $\{\chi_f \leq \beta\}$. Note that all of these model graphs have the same fractional chromatic number, but their ordinary chromatic numbers go to infinity.


The \textit{local chromatic number} of a graph $G$, denoted $\psi(G)$, is defined the following way. Take a proper coloring $\phi$ of $G$, and consider the local neighborhoods of each vertex. (We consider closed neighborhoods, i.e. the vertex itself is contained in its neighborhood.) If the maximum number of distinct colors appearing in the neighborhood of some vertex is $r$, then we say that the \textit{local value} of $\phi$ is $r$. The local chromatic number is then defined as the minimal local value across all proper colorings $\phi$.

We now define the graphs $U(m,r)$, which provide an alternative definition of the local chromatic number. Fix the natural numbers $m \geq r \geq 1$, and let \[
	V(U(m,r)) := \left\{ (\alpha, A) : \alpha \in [m], A \subseteq [m], \alpha \notin A, |A| \leq r-1  \right\}
\]
Two vertices $(\alpha, A), (\beta,B) \in V(U(m,r))$ are adjacent if and only if $\alpha \in B$ and $\beta \in A$.

It is clear that for any graph $G$, a local coloring of value $r$ exists if and only if there is a natural number $m$ such that $G \to U(m,r)$. In other words, for a constant $k \in \N$, the graph class $\{\psi \leq k\}$ is defined by the model graphs $\{ U(m,k) \mid m \in \N\}$. As with the fractional chromatic number case, we see that we have an infinite set of model graphs, whose local chromatic number is constant, but their ordinary chromatic number goes to infinity as per a result of Erdős et al. \cite{Erdos&al1986}.


\begin{thm}[Erdős et al. {\cite[Theorem~2.3]{Erdos&al1986}}] \label{thm:erdos-umr-chromat} 
	Fix $r \in \N$ with $r \geq 2$. Then \[
		\lim_{m \to \infty} \chi(U(m,r)) = \infty
	\]
\end{thm}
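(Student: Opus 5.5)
The plan is to embed a classical shift graph into $U(m,r)$ as a subgraph and then use the known fact that the chromatic number of shift graphs grows without bound. Since $\chi$ is monotone under subgraphs, this gives $\chi(U(m,r)) \to \infty$.

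\emph{The embedding.} Recall the shift graph $S_m$: its vertices are the triples $a<b<c$ in $[m]$, and $(a,b,c)$ is adjacent to $(b,c,d)$. Map the triple $(a,b,c)$ to the vertex $(b,\{a,c\})$ of $U(m,r)$. This is a legitimate vertex, because $b \notin \{a,c\}$ and the set has exactly two elements. Now take adjacent triples $(a,b,c)$ and $(b,c,d)$, whose images are $(b,\{a,c\})$ and $(c,\{b,d\})$. We have $b \in \{b,d\}$ and $c \in \{a,c\}$, so the images are adjacent in $U(m,r)$. The map is injective, since the triple can be read off as the middle element together with the pair. Hence $S_m$ is a subgraph of $U(m,r)$. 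If $r$ allows larger sets $A$, we simply use sets of size two, so nothing else changes as $r$ grows.

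\emph{Chromatic number of the shift graph.} It remains to show $\chi(S_m) \to \infty$; in fact $\chi(S_m) \geq \log\log m$ roughly, which is the step needing the most care.
\begin{enumerate}
\item Suppose $S_m$ is properly coloured with $c$ colours.
\item For each pair $a<b$, let $F(a,b)$ be the set of colours of the triples $(a,b,c)$ over all $c>b$.
\item Properness forces $F(a,b) \neq F(b,c)$ whenever $a<b<c$. Indeed, the colour of $(a,b,c)$ lies in $F(a,b)$. It cannot lie in $F(b,c)$, because every triple $(b,c,d)$ is adjacent to $(a,b,c)$.
\item So $F$ is a proper colouring of the ordinary shift graph on pairs, using at most $2^c$ colours.
\item Repeating the argument on pairs, a proper colouring of the pair shift graph with $t$ colours forces $m \leq 2^t$.
\end{enumerate}
Together these give $m \leq 2^{2^c}$, so $c \geq \log\log m$. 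This tends to infinity with $m$, which proves the theorem.

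\emph{The case $r=2$.} The main obstacle is whether the vertex $(b,\{a,c\})$ is actually available. This requires sets $A$ of size two in $U(m,r)$. I read the theorem, as cited from Erdős et al., in their parametrization, where the sets $A$ may have this size already for $r=2$. Under that reading the embedding above applies verbatim for every $r \geq 2$.

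If instead one insists on $|A| \leq r-1$ literally, then $U(m,2)$ would be a perfect matching plus isolated vertices. In that case the statement needs the Erdős et al. indexing for its lowest value of $r$. I would add a remark aligning the two conventions rather than change the argument.
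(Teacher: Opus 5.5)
Your argument is correct for every $r \geq 3$. The map $(a,b,c) \mapsto (b,\{a,c\})$ is an injective homomorphism from the triple shift graph into $U(m,r)$ once sets of size $2$ are allowed, i.e.\ once $r-1 \geq 2$. The two rounds of the ``set of colours'' argument are also sound: first $F(a,b) \neq F(b,c)$ on pairs, then pairwise distinct colour sets on single elements. Together they give $m \leq 2^{2^{c}}$, hence $\chi(U(m,r)) \geq \log_2\log_2 m$. The paper gives no proof here, only the citation to Erd\H{o}s et al., so your route is a genuinely self-contained alternative. It uses only the classical shift-graph estimate and yields an explicit bound, uniform in $r$. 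It also shows directly why local chromatic number $3$ already permits unbounded chromatic number.

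Where you should change course is $r=2$. The statement refers to $U(m,r)$ as defined in this paper, with $|A| \leq r-1$, and under that definition your own observation settles the case. The vertex $(\alpha,\{\beta\})$ has the single neighbour $(\beta,\{\alpha\})$, and every $(\alpha,\emptyset)$ is isolated. So $U(m,2)$ is a perfect matching plus isolated vertices, and $\chi(U(m,2)) = 2$ for all $m \geq 2$. Equivalently, the colouring $(\alpha,A)\mapsto\alpha$ shows $\psi(U(m,2)) \leq 2$, and Theorem~\ref{thm:bip2-equivalence} then forces bipartiteness.

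Thus the statement as printed is false for $r=2$, and the correct hypothesis is $r \geq 3$. That is exactly what you prove, and it is all the paper needs, since its applications (e.g.\ Question~\ref{que:eight}) involve $U(m,\alpha)$ only with $\alpha > 2$. Do not postulate that Erd\H{o}s et al.\ use a shifted parametrization: you cannot check this from the paper, and it would not rescue the statement in the paper's own notation anyway. Instead, say plainly that $r=2$ is a counterexample and state the theorem for $r \geq 3$.
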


It is to be noted that the fractional chromatic number of the graphs $U(m,r)$ is known, as per the following result of Körner, Pilotto and Simonyi \cite{Korner&al2005}.

\begin{thm}[Körner, Pilotto, Simonyi \cite{Korner&al2005}]
	For all values of $m, r \in \N$ with $m \geq r \geq 2$, we have\[
		\chi_f(U(m,r)) = r
	\]
\end{thm}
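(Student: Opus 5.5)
The equality $\chi_f(U(m,r)) = r$ will follow from matching bounds: a clique of size $r$ gives the lower bound, and an explicit fractional coloring of total weight $r$, built from random linear orders of $[m]$, gives the upper bound.

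\emph{Lower bound.} Since $\chi_f(G) \geq \omega(G)$ for every graph, it suffices to exhibit a clique of size $r$ in $U(m,r)$; this uses $m \geq r$. For $i \in [r]$ consider the vertex $v_i := (i, [r] \setminus \{i\})$. It is a valid vertex, since $i \notin [r]\setminus\{i\}$ and $|[r]\setminus\{i\}| = r-1$. For $i \neq j$ we have $i \in [r]\setminus\{j\}$ and $j \in [r]\setminus\{i\}$, so $v_i$ and $v_j$ are adjacent. Thus $\{v_1,\dots,v_r\}$ spans a $K_r$, and $\chi_f(U(m,r)) \geq r$.

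\emph{Upper bound.} For $\chi_f \leq r$ I will use the equivalent LP formulation of the fractional chromatic number: a nonnegative weighting of independent sets such that every vertex has total weight at least $1$, of total weight $r$. By the standard correspondence with homomorphisms into Kneser graphs (see \cite{ScheinermanUllman2011book}), rational weights can be converted into a homomorphism $U(m,r) \to KG(N, N/r)$ for suitable $N$.

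The independent sets come from linear orders. For each linear order $\pi$ of $[m]$, let
\[
I_\pi := \{ (\alpha, A) : \alpha \text{ precedes every element of } A \text{ in } \pi \}.
\]
\begin{itemize}
\item \emph{Each $I_\pi$ is independent.} Suppose $(\alpha,A),(\beta,B) \in I_\pi$ are adjacent. Then $\beta \in A$ forces $\alpha$ to precede $\beta$, while $\alpha \in B$ forces $\beta$ to precede $\alpha$, a contradiction.
\item \emph{Choice of weights.} Give each of the $m!$ sets $I_\pi$ weight $r/m!$, so the total weight is $r$.
\item \emph{Each vertex is covered.} For a vertex $(\alpha,A)$, the weight covering it is $r$ times the probability that, under a uniformly random $\pi$, $\alpha$ comes first among the $|A|+1$ elements of $\{\alpha\}\cup A$. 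This probability is $1/(|A|+1) \geq 1/r$, so the covering weight is at least $1$.
\end{itemize}
Hence $\chi_f(U(m,r)) \leq r$.

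The only step needing care is choosing the independent sets $I_\pi$ so that the adjacency condition yields a contradiction in the order. Once this is in place, the rest is a routine symmetry computation and an appeal to the LP characterization of $\chi_f$.
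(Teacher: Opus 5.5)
Your proof is correct and complete. The paper gives no proof of this statement: it quotes the result from Körner, Pilotto and Simonyi and uses it only as a black box to deduce $\chi_f(G)\le\psi(G)$. There is therefore no route in the paper to compare against, and your argument supplies a self-contained, elementary proof where the paper relies on a citation.

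Two small additions make it match the paper's own definition of $\chi_f$ (via homomorphisms into Kneser graphs), so you need not appeal to the LP characterization at all:
\begin{enumerate}
\item \emph{Lower bound.} A homomorphism $K_r\to KG(n,k)$ forces $r$ pairwise disjoint $k$-subsets of $[n]$, so $n/k\ge r$.
\item \emph{Upper bound.} Take $r$ copies of each $I_\pi$, giving $N=r\cdot m!$ independent sets. A vertex $(\alpha,A)$ lies in exactly $r\cdot m!/(|A|+1)\ge m!$ of them. Keeping exactly $m!$ of these for each vertex gives an explicit homomorphism $U(m,r)\to KG(r\cdot m!,\,m!)$. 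Adjacent vertices receive disjoint sets because each $I_\pi$ is independent, and the target is a genuine Kneser graph since $r\ge 2$.
\end{enumerate}

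Your construction also runs verbatim on any graph $G$ with a proper coloring $c$ of local value $r$: put $v$ in $I_\pi$ when $c(v)$ precedes every color of $c(N(v))$ in $\pi$. This yields the inequality $\chi_f(G)\le\psi(G)$, which is what the paper actually uses, directly and without passing through $U(m,r)$.
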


This implies that for all graphs $G$, we have $\chi_f(G) \leq \psi(G)$. Combining this with other well-known and easy-to-verify inequalities, the following can be stated about the relationship of the graph parameters considered in this paper.

\begin{thm} \label{thm:graphparameter-chain}
	For all graphs $G$, we have \[
		\omega(G) \leq \chi_f(G) \leq \psi(G) \leq \chi(G)
	\]
\end{thm}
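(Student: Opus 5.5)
We need to prove the chain $\omega(G) \leq \chi_f(G) \leq \psi(G) \leq \chi(G)$ of Theorem \ref{thm:graphparameter-chain}. We prove the three inequalities separately, in each case either via homomorphisms or directly from the definitions.

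\emph{First inequality, $\omega \leq \chi_f$.} If $K_\omega \subseteq G$ and $G \to KG(n,k)$, then composing gives a homomorphism $K_\omega \to KG(n,k)$. Its image consists of $\omega$ pairwise disjoint $k$-subsets of $[n]$. Hence $\omega k \leq n$, i.e.\ $n/k \geq \omega$. Taking the infimum over all fractional colorings yields $\chi_f(G) \geq \omega(G)$.

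\emph{Last inequality, $\psi \leq \chi$.} Take any proper coloring with $\chi(G)$ colors. Every closed neighborhood then sees at most $\chi(G)$ distinct colors, so the local value of this coloring is at most $\chi(G)$. Since $\psi(G)$ is the minimum local value over all proper colorings, $\psi(G) \leq \chi(G)$.

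\emph{Middle inequality, $\chi_f \leq \psi$.} This is the step needing the cited result. Set $r = \psi(G)$. By the characterization in Section \ref{sec:prelim}, there is an $m$ with $G \to U(m,r)$; we may take $m \geq r$, enlarging $m$ if necessary, since $U(m,r)$ is an induced subgraph of $U(m',r)$ for $m' \geq m$.

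\begin{itemize}
\item If $r \geq 2$, the Körner--Pilotto--Simonyi theorem gives $\chi_f(U(m,r)) = r$.
\item The fractional chromatic number is monotone under homomorphisms: if $G \to H \to KG(n,k)$, then $G \to KG(n,k)$. Hence $\chi_f(G) \leq \chi_f(U(m,r)) = r$.
\item If $r = 1$, then $G$ is edgeless. (An edge forces two colors in a closed neighborhood, so $\psi \geq 2$ whenever $G$ has an edge.) In this case $\chi_f(G) \leq 1 = \psi(G)$, since $G \to KG(1,1)$ trivially; this is just a degenerate convention.
\end{itemize}

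The only real content lies in the cited fractional chromatic number of $U(m,r)$, together with the equivalence between local colorings and homomorphisms to $U(m,r)$, which the paper states as clear. The main point of care is handling the edge cases ($r=1$, and $m$ small).
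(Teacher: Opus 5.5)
Your proof is correct and follows the paper's route. The paper also derives $\chi_f(G) \leq \psi(G)$ from the K\"orner--Pilotto--Simonyi value $\chi_f(U(m,r)) = r$, the characterization $G \to U(m,\psi(G))$, and monotonicity of $\chi_f$ under homomorphisms; it dismisses $\omega \leq \chi_f$ and $\psi \leq \chi$ as well known, and you verify these explicitly and also handle the edgeless case $r=1$, which the paper's convention $n \geq 2k$ for Kneser graphs would literally mishandle.
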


Obviously, bipartite graphs satisfy $\omega(G) = \chi(G) \leq 2$, and so the following theorem is trivially true.

\begin{thm} \label{thm:bip2-equivalence}
	Let $G$ be a graph. The following are equivalent. \begin{enumerate}
		\item $G$ is bipartite with at least one edge
		\item $\chi(G) = 2$
		\item $\chi_f(G) = 2$
		\item $\psi(G) = 2$
	\end{enumerate}
\end{thm}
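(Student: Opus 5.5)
The statement to prove is Theorem \ref{thm:bip2-equivalence}. I would prove it through the cycle of implications $(1)\Rightarrow(2)\Rightarrow(4)\Rightarrow(3)\Rightarrow(1)$, using the chain of Theorem \ref{thm:graphparameter-chain} to do almost all of the work.

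First, $(1)\Rightarrow(2)$. If $G$ is bipartite, then $\chi(G)\le 2$. If $G$ also has at least one edge, then $\omega(G)\ge 2$, hence $\chi(G)\ge\omega(G)\ge 2$, so $\chi(G)=2$.

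Next, assume $\chi(G)=2$. Then $G$ contains an edge: if it had none, a single color would be a proper coloring, giving $\chi(G)=1$. So $\omega(G)=2$, because $2\le\omega(G)\le\chi(G)=2$. Theorem \ref{thm:graphparameter-chain} now squeezes the two middle parameters:
\[
2=\omega(G)\le\chi_f(G)\le\psi(G)\le\chi(G)=2 .
\]
This gives $\psi(G)=2$, which is $(4)$, and also $\chi_f(G)=2$, which is $(3)$.

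For the implication $(4)\Rightarrow(3)$, assume $\psi(G)=2$. The chain gives $\chi_f(G)\le 2$. In a coloring of local value $2$, some closed neighborhood contains two colors, so $G$ has an edge. Therefore $\omega(G)\ge 2$ and $\chi_f(G)\ge 2$, so $\chi_f(G)=2$.

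The only step that is not pure bookkeeping is $(3)\Rightarrow(1)$, where we assume $\chi_f(G)=2$.
\begin{itemize}
\item By the definition of $\chi_f$, there is a homomorphism $G\to KG(2k,k)$ for some $k$.
\item In $KG(2k,k)$ each $k$-set is disjoint from exactly one other $k$-set, namely its complement. So $KG(2k,k)$ is a perfect matching, and in particular it is bipartite.
\item Composing the homomorphism with a $2$-coloring of $KG(2k,k)$ gives a proper $2$-coloring of $G$. Hence $G$ is bipartite.
\item $G$ has an edge, since an edgeless graph has $\chi_f=1$: it maps to $KG(1,1)$, read as the single-vertex graph.
\end{itemize}

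I expect no real obstacle anywhere. The only points needing care are the degenerate edgeless case and the check that $KG(2k,k)$ is a matching.
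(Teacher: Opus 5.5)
Your proof is correct and follows the paper's route. The paper only remarks that bipartite graphs satisfy $\omega(G)=\chi(G)\le 2$ and lets Theorem~\ref{thm:graphparameter-chain} squeeze $\chi_f$ and $\psi$, exactly as in your $(2)\Rightarrow(3),(4)$ step. Your version is more complete, because the chain alone cannot give the converse $(3)\Rightarrow(1)$, which the paper calls ``trivial''; you supply it by observing that $KG(2k,k)$ is a perfect matching, and you also handle the edgeless case via the convention $\chi_f=1$.
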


We study cover numbers of the form $c_{\{* \leq \beta\}}$, where $\beta$ is a constant, and $*$ is one of the graph parameters $\chi$, $\chi_f$ or $\psi$. As per Theorem \ref{thm:bip2-equivalence}, we can say that the case of $\beta = 2$ is equivalent for all three choices of $*$, and an exact formula for this cover number is already given by Theorem \ref{thm:hhm}. Therefore, the cases where $\beta > 2$ are the most interesting.




\section{Analogous questions with the fractional chromatic number} \label{sec:fractional}


In this section, we consider the graph classes $\{\chi_f \leq \beta\}$ with a rational constant $\beta$. In this section, we will always assume $\beta > 2$. Note that Theorem \ref{thm:bip2-equivalence} implies that the case of $\beta = 2$ is not interesting. We ask a question analogous to Theorem \ref{thm:hhm}, replacing the chromatic number with the fractional chromatic number, and the constant $k \in \N$ with a constant $\beta \in \Q$.

\begin{question} \label{q:fractional-analog}
	Is it true that for all graphs $G$ and rational numbers $\beta > 2$, the following holds? \[
		c_{\{\chi_f \leq \beta\}}(G) = \left\lceil \frac{\log \chi_f(G)}{\log \beta} \right\rceil
	\]
\end{question}

Counterintuitively, this is false. Consider the Kneser graph $KG(12,2)$, and let $\beta = 5/2$. Based on the formula, since $\left\lceil \frac{\log 6}{\log 2.5} \right\rceil = \left\lceil 1.955 \right\rceil = 2$, we would expect that $KG(12,2)$ is coverable by two $5/2$-colorable graphs. But since $KG(12,2)$ has a clique of size six, the ``little Ramsey" theorem (i.e. that $R(3,3) = 6$) tells us that any covering of it by two graphs will necessarily have a monochromatic triangle, meaning that it is impossible to cover $KG(12,2)$ by two $5/2$-colorable graphs.

Due to the direct analog of Theorem \ref{thm:hhm} being false, we will prove the following weaker statement.

\begin{thm} \label{thm:frac-upper-lower}
	Let $G$ be any graph, and $\beta > 2$ a rational number. Then \[
		\left\lceil \frac{\log \chi_f(G)}{\log \beta} \right\rceil \leq c_{\{\chi_f \leq \beta\}}(G) \leq c_{\{ \chi = \chi_f \}}(G) \cdot \left\lceil \frac{\log\chi_f(G)}{\log \left\lfloor \beta \right\rfloor} \right\rceil
	\]
\end{thm}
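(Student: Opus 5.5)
The lower bound rests on the submultiplicativity of the fractional chromatic number under edge-unions on a common vertex set: if $E(G) = E(G_1) \cup E(G_2)$, then $\chi_f(G) \leq \chi_f(G_1)\chi_f(G_2)$. I would prove this via homomorphisms. Let $\phi_i\colon G_i \to KG(n_i,k_i)$ with $n_i/k_i = \chi_f(G_i)$. Define $\phi(v) := \phi_1(v) \times \phi_2(v) \subseteq [n_1]\times[n_2]$, a $k_1k_2$-subset of an $n_1n_2$-set. Take adjacent $u,v$ in $G$; the edge $uv$ lies in some $G_i$, so $\phi_i(u)\cap\phi_i(v)=\emptyset$, and hence the product sets are disjoint. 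This gives a homomorphism $G \to KG(n_1n_2,k_1k_2)$. By induction, a covering of $G$ by $c$ graphs with $\chi_f \leq \beta$ yields $\chi_f(G) \leq \beta^c$, so $c \geq \log\chi_f(G)/\log\beta$. Since $c$ is an integer, the ceiling follows.

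For the upper bound, set $s := c_{\{\chi=\chi_f\}}(G)$ and cover $G$ by $H_1,\dots,H_s$ with $\chi(H_j)=\chi_f(H_j)$. Each $H_j$ is a subgraph of $G$, so $\chi(H_j) = \chi_f(H_j) \leq \chi_f(G)$, because $\chi_f$ is monotone under subgraphs (restrict the homomorphism). Apply Theorem \ref{thm:hhm} to each $H_j$ with $k=\lfloor\beta\rfloor \geq 2$. This covers $H_j$ by $\lceil \log\chi(H_j)/\log\lfloor\beta\rfloor\rceil \leq \lceil \log\chi_f(G)/\log\lfloor\beta\rfloor\rceil$ graphs of chromatic number at most $\lfloor\beta\rfloor$. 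Each such graph has $\chi_f \leq \chi \leq \lfloor\beta\rfloor \leq \beta$ by Theorem \ref{thm:graphparameter-chain}. Taking the union over $j$ of all these covering graphs gives a covering of $G$ by at most $s\cdot\lceil \log\chi_f(G)/\log\lfloor\beta\rfloor\rceil$ graphs in $\{\chi_f\leq\beta\}$.

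There is one subtlety to check. If $\chi_f(G) \leq 1$, then $G$ is edgeless and both sides are trivial, or degenerate with a logarithm of zero. Also, the ceiling in Theorem \ref{thm:hhm} must be monotone in $\chi$, which it is. The main step is the product construction for the lower bound. It is routine once phrased via Kneser homomorphisms, but it requires the existence of a homomorphism to $KG(n,k)$ with $n/k$ exactly $\chi_f$, which the paper's definition guarantees for finite graphs.

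Finally, I would note that Theorem \ref{thm:m-omega} gives $s \leq \lceil \log\chi(G)/\log\omega(G)\rceil$. That bound follows from $\{\chi=\omega\}\subseteq\{\chi=\chi_f\}$ via Theorem \ref{thm:coverno-bijection} together with Theorem \ref{thm:graphparameter-chain}, and it recovers the introduction's form of the statement, Theorem \ref{thm:INTRO_frac-upper-lower}.
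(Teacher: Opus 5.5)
Your proposal is correct and follows essentially the same route as the paper. The lower bound is the same product-of-Kneser-colorings argument; your pairwise version, which uses each $G_i$'s own optimal target $KG(n_i,k_i)$, also avoids the paper's step of passing to a common target $KG(p,q)$. The upper bound likewise applies Theorem~\ref{thm:hhm} with $k=\lfloor\beta\rfloor$ to each piece of an optimal $\{\chi=\chi_f\}$-cover, using $\{\chi\leq\lfloor\beta\rfloor\}\subseteq\{\chi_f\leq\beta\}$.

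Your direct argument keeps the ceiling and states the subgraph monotonicity $\chi_f(H_j)\leq\chi_f(G)$ explicitly. This is the correct form of the paper's intermediate step, which drops the ceiling and relies on the false equality $\lfloor\beta\rfloor^{c_{\{\chi\leq\lfloor\beta\rfloor\}}(G)}=\chi(G)$. For example, with $G=K_3$ and $\beta=5/2$ one has $\log 3/\log 2<2=c_{\{\chi_f\leq\beta\}}(K_3)$.
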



In this weakening, the lower bound on the cover number is exactly as we would expect. The upper bound, on the other hand, involves the cover number by $\{ \chi_f = \chi \}$. This means that for graphs satisfying $\chi_f(G) = \chi(G)$, such as perfect graphs and a variety of other graph families, we have an asymptotically sharp bound.

Since $\{ \chi_f = \chi \} \supseteq \{ \chi = \omega\}$, we can use Theorem \ref{thm:coverno-bijection} and Theorem \ref{thm:m-omega} to arrive at the following conclusion, proving Theorem \ref{thm:INTRO_frac-upper-lower}.

\begin{cor} \label{cor:frac-upper-lower}
	Let $G$ be any graph, and $\beta > 2$ a rational number. Then \[
		\left\lceil \frac{\log \chi_f(G)}{\log \beta} \right\rceil \leq c_{\{\chi_f \leq \beta\}}(G) \leq \left\lceil \frac{\log \chi(G)}{\log \omega(G)} \right\rceil \cdot \left\lceil \frac{\log\chi_f(G)}{\log \left\lfloor \beta \right\rfloor}  \right\rceil
	\]
\end{cor}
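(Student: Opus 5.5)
The corollary follows from Theorem \ref{thm:frac-upper-lower} once the factor $c_{\{\chi = \chi_f\}}(G)$ in its upper bound is bounded by a quantity depending only on $\chi(G)$ and $\omega(G)$. The lower bound is identical in both statements, so it carries over unchanged. Everything therefore reduces to showing
\[
	c_{\{\chi = \chi_f\}}(G) \leq \left\lceil \frac{\log \chi(G)}{\log \omega(G)} \right\rceil .
\]

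First I check the class inclusion $\{\chi = \omega\} \subseteq \{\chi = \chi_f\}$. By Theorem \ref{thm:graphparameter-chain}, $\omega(H) \leq \chi_f(H) \leq \chi(H)$ for every graph $H$. So if $\chi(H) = \omega(H)$, the chain collapses and $\chi_f(H) = \chi(H)$. Applying Theorem \ref{thm:coverno-bijection} to this inclusion gives $c_{\{\chi = \chi_f\}}(G) \leq c_{\{\chi = \omega\}}(G)$ for all graphs $G$.

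Next, Theorem \ref{thm:m-omega} evaluates the right-hand side as $\lceil \log \chi(G) / \log \omega(G) \rceil$. Substituting this into the upper bound of Theorem \ref{thm:frac-upper-lower} yields the stated inequality. The second factor $\lceil \log\chi_f(G)/\log\lfloor\beta\rfloor\rceil$ is untouched, and it is nonnegative, so multiplying preserves the inequality.

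The only real obstacle is a degenerate case. When $\omega(G) \leq 1$, the expression $\log \omega(G)$ is $0$ or undefined. I would handle this separately. If $G$ has no edges, the cover number is trivial and the conventions of Theorem \ref{thm:m-omega} apply. If $G$ has at least one edge, then $\omega(G) \geq 2$ and the formula is well defined. With that case disposed of, the corollary is a direct combination of the three cited results.
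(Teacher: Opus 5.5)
Your proposal is correct and is the paper's own argument: the paper also obtains the corollary from Theorem~\ref{thm:frac-upper-lower} by noting $\{\chi=\omega\}\subseteq\{\chi=\chi_f\}$, applying Theorem~\ref{thm:coverno-bijection}, and evaluating $c_{\{\chi=\omega\}}(G)$ via Theorem~\ref{thm:m-omega}. Your justification of the inclusion through the chain $\omega\le\chi_f\le\chi$, and your remark on the edgeless case, only make explicit what the paper leaves implicit.
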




We prove Theorem \ref{thm:frac-upper-lower} by proving the upper and lower bounds separately. First, the lower bound.

\begin{thm}
	Let $G$ be any graph, and $\beta > 2$ a rational number. Then \[
		c_{\{\chi_f \leq \beta\}}(G) \geq \left\lceil \frac{\log \chi_f(G)}{\log \beta} \right\rceil
	\]
\end{thm}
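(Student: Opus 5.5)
The plan is to prove submultiplicativity of the fractional chromatic number under edge unions on a common vertex set:
\[
\chi_f(G_1 \cup G_2) \leq \chi_f(G_1)\,\chi_f(G_2).
\]
The lower bound then follows by induction. Suppose $G$ is covered by $G_1, \dots, G_c$ with each $\chi_f(G_i) \leq \beta$. As noted in Section~\ref{sec:prelim}, we may assume every $G_i$ has vertex set $V(G)$. Iterating the product inequality gives $\chi_f(G) \leq \prod_i \chi_f(G_i) \leq \beta^c$, because adding edges never decreases $\chi_f$ and $G$ is a spanning subgraph of $\bigcup_i G_i$. Hence $c \geq \log \chi_f(G)/\log \beta$, and since $c$ is an integer we get $c \geq \lceil \log \chi_f(G)/\log\beta\rceil$.

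For the product inequality I would use the homomorphism definition from Section~\ref{sec:prelim}. Choose homomorphisms $\phi_1\colon G_1 \to KG(n_1,k_1)$ and $\phi_2\colon G_2 \to KG(n_2,k_2)$ that realize $n_i/k_i = \chi_f(G_i)$. Define
\[
\phi(v) := \phi_1(v) \times \phi_2(v) \subseteq [n_1]\times[n_2],
\]
a $k_1k_2$-subset of an $n_1n_2$-element ground set. If $uv$ is an edge of $G_1$, then $\phi_1(u)\cap\phi_1(v)=\emptyset$, so the two product sets are disjoint. The same holds for edges of $G_2$ using the second coordinate. Thus $\phi$ is a homomorphism $G_1\cup G_2 \to KG(n_1n_2,k_1k_2)$, and $\chi_f(G_1\cup G_2) \leq n_1n_2/(k_1k_2)$.

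Two small points need care. First, vertices mapped to the same vertex of the Kneser graph must be non-adjacent; this is automatic, since $\phi$ is a homomorphism into a loopless graph. Second, the realizing homomorphisms must exist, i.e.\ the infimum in the definition of $\chi_f$ is attained. For finite graphs this is the standard LP fact that the optimum is rational, and the paper already asserts that the definition is well-defined. If one prefers to avoid attainment, it suffices to work with any $n_i/k_i$ for which $G_i \to KG(n_i,k_i)$ holds and $n_i/k_i \leq \beta$, which is exactly what membership in $\{\chi_f\leq\beta\}$ supplies.

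I do not expect a serious obstacle. The only step requiring an idea is the product construction on $[n_1]\times[n_2]$; the rest is routine bookkeeping with the induction and the ceiling.
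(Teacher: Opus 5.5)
Your proof is correct and is essentially the paper's argument: the paper also colors each vertex by the product $\phi_1(v)\times\cdots\times\phi_c(v)$ of its Kneser color sets, but with all $c$ factors at once instead of by iterating a two-factor lemma. Your version is slightly cleaner, because allowing different targets $KG(n_i,k_i)$ avoids the paper's normalization to a common $KG(p,q)$ with $p=\max_i p_i$ and $q=\max_i q_i$; that step is not valid in general (e.g.\ $KG(10,4)\not\to KG(15,6)$, since by Erd\H{o}s--Ko--Rado such a map would need $15$ stars covering every $4$-set exactly $6$ times, forcing each of the $10$ star centers to have multiplicity $3/2$).
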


\begin{proof}
	Take an optimal covering of $G$ by $c = c_{\{\chi_f \leq \beta\}}(G)$-many graphs with fractional chromatic number at most $\beta$. Call these graphs $G_1, \dots, G_c$. Suppose that $G_i \to KG(p_i, q_i)$ for some $p_i, q_i \in \N$ with $\frac{p_i}{q_i} = \beta$. Then for $p = \max\limits_{i} p_i$ and $q = \max\limits_{i} q_i$, it is easy to see that $p/q = \beta$, and $G_i \to KG(p,q)$ for all $i$.
	
	We will construct a product coloring the following way. For each vertex $v \in V(G)$, let $\phi_i(v)$ denote the vertex of $KG(p,q)$ assigned to $v$ in the $\frac{p}{q}$-coloring of $G_i$. We can naturally think of the vertices of $KG(p,q)$ as a list of $q$ different colors chosen from $\{1, \dots, p\}$. Let $\tau$ be the mapping that takes a vertex $v \in V(G)$ to the set \[
		\tau(v) := \left\{ (\phi_1, \dots, \phi_c) \mid \phi_1 \in \phi_1(v), \dots, \phi_c \in \phi_c(v) \right\}
	\]
	
	Then, for each vertex $v \in V(G)$, $\tau(v)$ is a set containing $q^c$ different lists of $c$ colors each, chosen from a total of $p^c$ possible lists. In other words, $\tau(v)$ can be naturally understood as a vertex of $KG(p^c, q^c)$. It is now easy to see that $\tau$ is a valid $\frac{p^c}{q^c}$-coloring; a given list $(\phi_1, \dots, \phi_c)$ cannot appear in both $\tau(v)$ and $\tau(w)$ for two adjacent vertices $v$ and $w$, as at least one $G_i$ must cover the edge between them, and thus each list in $\tau(v)$ must have a different $\phi_i$ from the lists in $\tau(w)$, since $\phi_i(v)$ is disjoint from $\phi_i(w)$. 
	
	Thus, $\chi_f(G) \leq \left(p/q\right)^c = \beta^c$, which can be rearranged into the desired inequality.
\end{proof}


To prove the upper bound in Theorem \ref{thm:frac-upper-lower}, we prove the following assertion for graphs in $\{\chi_f = \chi\}$. Incidentally, combined with the lower bound, this leads to an asymptotically sharp result on this restricted family of graphs.

\begin{thm}
	Let $\beta$ be a rational number with $\beta > 2$. If $G \in \{\chi_f = \chi\}$, then \[
		\frac{\log\chi_f(G)}{\log \left\lfloor \beta \right\rfloor} \geq c_{\{ \chi_f \leq \beta \}}(G)
	\]
\end{thm}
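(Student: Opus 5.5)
The plan is to reduce the statement to the theorem of Harary, Hsu and Miller (Theorem~\ref{thm:hhm}) via a containment of graph classes. Everything should follow from comparing the classes $\{\chi \leq \lfloor\beta\rfloor\}$ and $\{\chi_f \leq \beta\}$, and from the hypothesis $\chi_f(G)=\chi(G)$.

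\textbf{Step 1 (containment).} Set $k := \lfloor \beta \rfloor$; since $\beta > 2$, we have $k \geq 2$. Every graph $H$ with $\chi(H) \leq k$ satisfies
\[
\chi_f(H) \leq \chi(H) \leq k \leq \beta
\]
by Theorem~\ref{thm:graphparameter-chain}. Alternatively, a homomorphism $H \to K_k = KG(k,1)$ is itself a $k/1$-coloring. Hence $\{\chi \leq k\} \subseteq \{\chi_f \leq \beta\}$. By Theorem~\ref{thm:coverno-bijection}, for every graph $G$,
\[
c_{\{\chi_f \leq \beta\}}(G) \leq c_{\{\chi \leq k\}}(G).
\]

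\textbf{Step 2 (apply Theorem~\ref{thm:hhm}).} Theorem~\ref{thm:hhm} gives
\[
c_{\{\chi \leq k\}}(G) = \left\lceil \frac{\log \chi(G)}{\log k} \right\rceil .
\]
Since $G \in \{\chi_f = \chi\}$, we may replace $\chi(G)$ by $\chi_f(G)$. This yields
\[
c_{\{\chi_f \leq \beta\}}(G) \leq \left\lceil \frac{\log \chi_f(G)}{\log \lfloor \beta \rfloor} \right\rceil .
\]
This is exactly the factor that appears in the upper bound of Theorem~\ref{thm:frac-upper-lower}. It is applied there to each member of an optimal $\{\chi = \chi_f\}$-covering, so the statement above is to be read with this rounding, since the cover number is an integer.

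\textbf{Step 3 (feeding into Theorem~\ref{thm:frac-upper-lower}).} Take an optimal covering $G_1,\dots,G_c$ of $G$ with $c = c_{\{\chi=\chi_f\}}(G)$. Each $G_i$ is a subgraph of $G$, so $\chi_f(G_i) \leq \chi_f(G)$. Applying Step~2 to each $G_i$ and taking the union of the resulting coverings gives the product bound. Monotonicity of the logarithm is used here to replace $\chi_f(G_i)$ by $\chi_f(G)$.

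\textbf{Main obstacle.} The only delicate point is the passage between the rounded and the unrounded quotient. The cover number is an integer, so the comparison must be made with the ceiling. Already for $G = K_3$ and $\beta = 5/2$ one has $\chi_f(K_3) = 3 > \beta$, hence $c_{\{\chi_f \leq \beta\}}(K_3) \geq 2$, while $\log 3/\log 2 < 2$. So the key inequality I would actually establish and use is the ceiling form from Step~2. Steps~1 and~2 are otherwise routine.
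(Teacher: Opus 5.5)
Your argument is correct and uses the same ingredients as the paper: the containment $\{\chi \leq \lfloor\beta\rfloor\} \subseteq \{\chi_f \leq \beta\}$, Theorem~\ref{thm:coverno-bijection}, Theorem~\ref{thm:hhm}, and the hypothesis $\chi_f(G)=\chi(G)$. The difference is that you argue directly and obtain the ceiling form $c_{\{\chi_f \leq \beta\}}(G) \leq \lceil \log\chi_f(G)/\log\lfloor\beta\rfloor \rceil$. The paper instead argues by contradiction for the unrounded inequality. Its final step asserts that $\lfloor\beta\rfloor^{c_{\{\chi \leq \lfloor\beta\rfloor\}}(G)}$ equals $\chi(G)$. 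Theorem~\ref{thm:hhm} only gives $\geq \chi(G)$, with equality only when $\chi(G)$ is a power of $\lfloor\beta\rfloor$, so no contradiction is reached.

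Your example $G=K_3$, $\beta=5/2$ is a valid counterexample to the statement as literally written. Indeed, $c_{\{\chi_f \leq 5/2\}}(K_3)=2 > \log 3/\log 2$. So the ceiling version you prove is the correct statement. It is also exactly what the upper bound in Theorem~\ref{thm:frac-upper-lower} needs, and your Step~3, using $\chi_f(G_i)\leq\chi_f(G)$, handles that product bound properly.
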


\begin{proof}
	Let $G$ be a graph as in the theorem, and indirectly assume that \[
		\frac{\log\chi_f(G)}{\log \left\lfloor \beta \right\rfloor} < c_{\{ \chi_f \leq \beta \}}(G)
	\] or equivalently \[
		\chi_f(G) < \left\lfloor \beta \right\rfloor^{c_{\{ \chi_f \leq \beta \}}(G)}
	\]
	
	Notice that since $\{ \chi_f \leq \beta \} \supseteq \{ \chi_f \leq \left\lfloor \beta \right\rfloor \} \supseteq \{ \chi \leq \left\lfloor \beta \right\rfloor \}$, we can use Theorem \ref{thm:coverno-bijection} to get \[
		\chi_f(G) < \left\lfloor \beta \right\rfloor^{c_{\{ \chi_f \leq \beta \}}(G)} \leq \left\lfloor \beta \right\rfloor^{c_{\{ \chi \leq \left\lfloor \beta \right\rfloor \}}(G)}
	\] and from here, since $\lfloor \beta \rfloor$ is an integer, we can use Theorem \ref{thm:hhm} to conclude that the right side of this is equal to $\chi(G)$. We arrive at $\chi_f(G) < \chi(G)$, a contradiction.
\end{proof}



Generalizing the previous result to all graphs, we arrive at the following conclusion.

\begin{thm}
	For all graphs $G$ and rational numbers $\beta > 2$, we have \[
		c_{\{ \chi_f \leq \beta \}}(G) \leq c_{\{ \chi = \chi_f \}}(G) \cdot \frac{\log\chi_f(G)}{\log \left\lfloor \beta \right\rfloor}
	\]
\end{thm}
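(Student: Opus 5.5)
The plan is to decompose $G$ into pieces on which the previous theorem applies, cover each piece separately, and take the union of the resulting coverings. Let $c = c_{\{\chi = \chi_f\}}(G)$, and fix an optimal covering $E(G) = E(H_1) \cup \dots \cup E(H_c)$ with each $H_i \subseteq G$ and $\chi(H_i) = \chi_f(H_i)$. We may assume every $H_i$ has at least one edge. An edgeless $H_i$ contributes nothing to the covering, so it can be discarded, which only lowers the count. Any $H_i$ with an edge then satisfies $\chi_f(H_i) \geq 2$, and the logarithms below are positive. The degenerate case where $G$ itself has no edges is set aside as trivial, or excluded by convention.

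Next we apply the previous theorem to each $H_i$. Since $H_i \in \{\chi_f = \chi\}$, it gives
\[
	c_{\{\chi_f \leq \beta\}}(H_i) \leq \frac{\log \chi_f(H_i)}{\log \lfloor \beta \rfloor}.
\]
We then bound $\chi_f(H_i) \leq \chi_f(G)$. This holds because $H_i$ is a subgraph of $G$, so the inclusion map is a homomorphism $H_i \to G$. Composing it with a fractional coloring $G \to KG(n,k)$ gives a fractional coloring of $H_i$ with the same ratio $n/k$. Hence each $H_i$ can be covered by at most $\frac{\log \chi_f(G)}{\log \lfloor \beta \rfloor}$ graphs from $\{\chi_f \leq \beta\}$, and these are subgraphs of $H_i$, hence of $G$.

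Finally, the union over $i$ of these coverings is a covering of $G$, since every edge of $G$ lies in some $H_i$ and is covered there. Summing the $c$ bounds gives
\[
	c_{\{\chi_f \leq \beta\}}(G) \leq \sum_{i=1}^{c} c_{\{\chi_f \leq \beta\}}(H_i) \leq c \cdot \frac{\log \chi_f(G)}{\log \lfloor \beta \rfloor}.
\]
Taking the integer ceiling of the per-piece bound yields the version stated in Theorem \ref{thm:frac-upper-lower}.

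There is no real obstacle here. The only points needing care are the monotonicity of $\chi_f$ under taking subgraphs, and discarding edgeless pieces so that the previous theorem is applied only to graphs with $\chi_f \geq 2$.
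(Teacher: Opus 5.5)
\textbf{There is a genuine gap.} Your argument has the same structure as the paper's proof: take an optimal $\{\chi=\chi_f\}$-covering, cover each piece using the previous theorem, and take the union. The union step and the bound $\chi_f(H_i)\le\chi_f(G)$ are fine. The problem is the per-piece inequality $c_{\{\chi_f\le\beta\}}(H_i)\le \log\chi_f(H_i)/\log\lfloor\beta\rfloor$, which is false without a ceiling.
\begin{itemize}
\item For $H_i=K_3$ and $\beta=5/2$, the left side is $2$ (since $\chi_f(K_3)=3>5/2$), while the right side is $\log 3/\log 2\approx 1.585$.
\item For $H_i=K_2$ and $\beta=3$, the right side is $\log 2/\log 3<1$.
\end{itemize}
Since $c_{\{\chi=\chi_f\}}(K_3)=1$, the displayed statement itself fails for $G=K_3$, $\beta=5/2$. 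So no route can prove it in this ceiling-free form.

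The previous theorem you cite breaks at its last step. Theorem \ref{thm:hhm} only gives $\lfloor\beta\rfloor^{c_{\{\chi\le\lfloor\beta\rfloor\}}(G)}\ge\chi(G)$, with equality only when $\chi(G)$ is a power of $\lfloor\beta\rfloor$. The claimed contradiction $\chi_f(G)<\chi(G)$ therefore never arises. The paper's proof of the present statement relies on that theorem and inherits the defect.

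\textbf{The fix.} What is true, and what Theorem \ref{thm:frac-upper-lower} actually uses, is the ceiling version, and it follows directly without any contradiction argument. Since $\lfloor\beta\rfloor\ge 2$ and $\{\chi\le\lfloor\beta\rfloor\}\subseteq\{\chi_f\le\beta\}$, Theorems \ref{thm:coverno-bijection} and \ref{thm:hhm} give, for each piece $H_i$ with at least one edge and $\chi(H_i)=\chi_f(H_i)$,
\[
c_{\{\chi_f\le\beta\}}(H_i)\le c_{\{\chi\le\lfloor\beta\rfloor\}}(H_i)=\left\lceil\frac{\log\chi_f(H_i)}{\log\lfloor\beta\rfloor}\right\rceil\le\left\lceil\frac{\log\chi_f(G)}{\log\lfloor\beta\rfloor}\right\rceil .
\]
Your union argument then yields $c_{\{\chi_f\le\beta\}}(G)\le c_{\{\chi=\chi_f\}}(G)\cdot\left\lceil\log\chi_f(G)/\log\lfloor\beta\rfloor\right\rceil$. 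Your closing remark about taking the ceiling should therefore be the main line of the proof, not a final adjustment, because the ceiling-free inequality you pass through is not true.
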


\begin{proof}
	Take an optimal covering $G = G_1 \cup \dots \cup G_c$ by $\{ \chi = \chi_f \}$-graphs, where $c = c_{\{ \chi = \chi_f \}}$. Each of the covering graphs can be covered by $\frac{\log\chi_f(G)}{\log \left\lfloor \beta \right\rfloor}$-many graphs as per the previous theorem, so taking them all together gives a covering of $G$ by the desired number of $\beta$-colorable graphs.
\end{proof}


This concludes the proof of Theorem \ref{thm:INTRO_frac-upper-lower}.

\section{Small cases with the fractional chromatic number} \label{sec:specificcases}




Let us now investigate the cover numbers of the form $c_{\{ \chi_f \leq \beta\}}$ of certain small Kneser graphs. In particular, we return to the disproof of Question \ref{q:fractional-analog}. We have noted previously that $KG(12,2)$ cannot be covered by two $2.5$-colorable graphs. 
It is now a natural question to ask about the Kneser graphs $KG(n,2)$, how much do we have to decrease the value of $n$ to arrive at a graph that is coverable by two $2.5$-colorable 
graphs. 

Actually, the disproof of Question \ref{q:fractional-analog} gives an even stronger result, namely, that $KG(12,2)$ cannot be covered by two triangle-free graphs. This triangle-free case, i.e. $c_{\{\omega \leq 2\}}$, is essentially the inverse of a Ramsey-theoretic question in Kneser graphs, which has been recently resolved by computer in Heath et al. \cite{Heath&al2025}.

\begin{thm}[Heath et al. \cite{Heath&al2025}]
	The following two statements hold.
    \begin{enumerate}
    	\item The Kneser graph $KG(9,2)$ cannot be covered by two triangle-free graphs.
    	\item The Kneser graph $KG(8,2)$ can be covered by two triangle-free graphs.
    \end{enumerate}
\end{thm}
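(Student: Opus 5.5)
The plan is to treat the two parts separately: part 2 by an explicit construction, part 1 by a structural impossibility argument with a finite case check. Throughout, the vertices of $KG(n,2)$ are the $2$-subsets of $[n]$, and two pairs are adjacent when they are disjoint. A triangle is therefore a set of three pairwise disjoint pairs. A covering by two triangle-free graphs is the same as a red/blue colouring of the edges with no monochromatic triangle, since we may assume the covering is a partition (triangle-freeness is closed under taking subgraphs).

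\textbf{Part 2} ($KG(8,2)$ is coverable). I would look for a colouring that is invariant under a large subgroup $\Gamma$ of $S_8$, for instance the cyclic group $\Z_8$ acting on $[8]$, or $\Z_7$ fixing one point. The graph has $28$ vertices and $210$ edges, so after quotienting by $\Gamma$ only a few dozen edge orbits remain. One then searches over two-colourings of these orbits, or simply runs a SAT solver, and checks the triangle orbits, i.e.\ the perfect-ish triples of disjoint pairs. The proof would present the explicit colouring, for example by giving a rule in terms of the pair differences modulo $8$, and state that checking the $\binom{8}{2}\binom{6}{2}\binom{4}{2}/6 = 420$ triangles is routine. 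If a symmetric colouring does not exist, I would fall back on the certificate from \cite{Heath&al2025}.

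\textbf{Part 1} ($KG(9,2)$ is not coverable). Suppose a triangle-free red/blue covering exists and derive a contradiction.

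\begin{enumerate}
\item Inside $[9]$, every $6$-subset spans a $K_6$-free structure with $15$ perfect matchings. More directly, any $6$ pairwise disjoint-compatible pairs, meaning a clique of size $6$, are impossible because $\omega = 4$. So the $R(3,3)=6$ argument does not apply directly.
\item Instead, use cliques of size $4$: these correspond to four disjoint pairs covering $8$ of the $9$ points. Each such $K_4$ must be coloured without monochromatic triangles, so each colour class on it is a path or a matching-type configuration.
\item Consider the $KG(6,2)$ subgraphs (the $15$ pairs inside a $6$-set) and their $15$ triangles, which are the perfect matchings, and propagate constraints between them.
\end{enumerate}

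I expect this propagation to be the main obstacle. Realistically, a human-readable proof is unlikely, and I would rely on an exhaustive search: fix the colouring on one $K_4$ up to symmetry, then extend by backtracking with unit propagation over all triangles. Alternatively, encode the problem as a SAT instance with $630$ variables (the edges) and one pair of clauses per triangle, forbidding all-red and all-blue. Its unsatisfiability would be certified by a DRAT proof, exactly as in \cite{Heath&al2025}. Symmetry breaking under $S_9$ would be essential to make the search feasible.
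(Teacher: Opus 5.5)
Your plan matches the paper, which also gives no independent proof: it imports the computer-assisted result of Heath et al., namely an exhaustive search ruling out a triangle-free $2$-edge-colouring of $KG(9,2)$ and a computer-found colouring of $KG(8,2)$. So your SAT/certificate approach for part~1 and an explicit, exhaustively checked colouring for part~2 is the same route, and your exploratory steps 1--3 are not needed. One slip: $KG(9,2)$ has $36\cdot 21/2=378$ edges, not $630=\binom{36}{2}$, and it has $\binom{9}{6}\cdot 15=1260$ triangles, so your encoding has $378$ variables and $2520$ clauses.
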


For $KG(9,2)$, this means that a covering by two $2.5$-colorable graphs is impossible. For $KG(8,2)$, they find a covering by two triangle-free graphs. One of the covering graphs they find has fractional chromatic number exceeding three, however. Another triangle-free covering of $KG(8,2)$ was found independently in \cite{Yang2025personal}, this covering also uses a graph with high fractional chromatic number. Both constructions were found by computer. Therefore, the question of whether $KG(8,2)$ can be covered by two $2.5$-colorable graphs is still open.

For $KG(7,2)$, we present a construction of a covering by two $2.5$-colorable graphs. In fact, we can state the construction generally for all Kneser graphs of the form $KG(3k+1,k)$ for some $k$.

\begin{thm} \label{thm:construction}
	Let $k \geq 1$ be an integer, then $KG(3k+1,k)$ can be covered by two triangle-free graphs, with fractional chromatic numbers $\frac{3k-1}{k}$ and $2$.

\end{thm}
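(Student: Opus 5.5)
The plan is to write down an explicit edge partition of $KG(3k+1,k)$ into two subgraphs, $G_1$ and $G_2$. The vertices are the $k$-subsets of $[3k+1]$. I single out the two elements $x = 3k+1$ and $y = 3k$. The idea is that $G_1$ should essentially be the Kneser graph on the ground set $[3k-1]$. That graph is $KG(3k-1,k)$, which is triangle-free because $3k-1 < 3k$: three pairwise disjoint $k$-sets need $3k$ elements. Its fractional chromatic number is $(3k-1)/k$, the value required by Theorem~\ref{thm:construction}. The graph $G_2$ should collect all remaining edges, and I want it to be bipartite, hence of fractional chromatic number $2$ by Theorem~\ref{thm:bip2-equivalence} and in particular triangle-free.

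The concrete steps are as follows.

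First, I observe two independent sets. The sets containing $x$ form an independent set, since they pairwise intersect. Likewise the sets containing $y$ form an independent set. So every edge touching one of these two families already lies in a bipartite-looking piece. The natural first attempt is:
\begin{itemize}
\item $G_1$ is spanned by the edges between sets avoiding $\{x,y\}$;
\item $G_2$ is spanned by all edges incident to a set containing $x$ or $y$.
\end{itemize}

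Second, I would check bipartiteness of $G_2$. This fails as stated, because a set containing $y$ but not $x$ can be adjacent to a set containing $x$ but not $y$. Together with sets avoiding both, these create odd cycles, e.g.\ triangles $\{A\ni y,\ B\ni x,\ C\}$ with $A,B,C$ partitioning $[3k+1]$ minus one element.

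The fix I would try is to move part of these edges into $G_1$ via a homomorphism rather than a literal restriction. For each set $A$ with $y \in A$ and $x \notin A$, choose a replacement element $m(A) \in [3k-1]\setminus A$, in a canonical way (say the smallest element not in $A$, or a cyclic successor rule). Then define $\phi(A) = (A\setminus\{y\}) \cup \{m(A)\}$, and let $\phi$ be the identity on sets avoiding $\{x,y\}$. An edge $AB$ among $x$-free sets is placed in $G_1$ exactly when $\phi(A)\cap\phi(B)=\emptyset$. This makes $\phi$ a homomorphism $G_1 \to KG(3k-1,k)$, so $\chi_f(G_1)\le (3k-1)/k$. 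All other edges go to $G_2$: the edges at $x$-sets, plus the $x$-free edges $AB$ with $y\in A$ and $m(A)\in B$.

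Third, I would exhibit the bipartition of $G_2$. One side is the sets containing $x$ together with those $y$-sets that only meet $G_2$ via $x$-sets. The other side consists of the $x$-free sets that are adjacent in $G_2$ to an $x$-set, arranged so that the rule $m(\cdot)$ forces every ``leftover'' edge to go between the two sides. Finally, since $(3k-1)/k$ is also a lower bound for $\chi_f(G_1)$, as $G_1$ contains the copy of $KG(3k-1,k)$ on $[3k-1]$, equality holds.

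The main obstacle is the simultaneous choice of $m(A)$ and the bipartition in the third step. The leftover edges $AB$ with $m(A)\in B$ must not close an odd cycle with the $x$-edges. I would first try the rule $m(A)=\min([3k-1]\setminus A)$ and test the cases $k=1$ (giving $KG(4,1)=K_4$, split as a path-plus-bipartite) and $k=2$ (giving $KG(7,2)$) by hand. If that rule fails, I would instead let the choice of $G_2$ drive the construction. That is, I would take $G_2$ to be the bipartite graph between the $x$-sets and the $x$-free sets meeting a fixed structure, and then verify directly that the complement of $G_2$ admits a homomorphism into $KG(3k-1,k)$.
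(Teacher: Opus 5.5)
\textbf{There is a genuine gap.} Your third step is never carried out, and the modification you propose cannot work for any choice of $m$.

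The issue is that $G_2$ still receives \emph{all} edges at sets containing $x$. This includes the edges from $x$-sets to $y$-sets, which you correctly identified as the source of the odd cycles. The map $\phi$ only moves some edges between $y$-sets and sets avoiding $\{x,y\}$ into $G_1$, and the ones it leaves in $G_2$ close triangles. To see this, take any $A$ with $y\in A$ and $x\notin A$:
\begin{itemize}
\item The set $[3k-1]\setminus A$ has $2k$ elements and contains $m(A)$. Choose a $k$-set $B\subseteq[3k-1]\setminus A$ with $m(A)\in B$. Then $AB$ is an edge, and it goes to $G_2$ because $m(A)\in\phi(A)\cap\phi(B)$.
\item The set $[3k-1]\setminus(A\cup B)$ still has $k$ elements. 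For any $(k-1)$-subset $S$ of it, $C=\{x\}\cup S$ is disjoint from $A$ and $B$. Both $CA$ and $CB$ lie in $G_2$ because $x\in C$.
\end{itemize}
So $G_2$ always contains a triangle; for $k=1$ it is $\{3\},\{m(\{3\})\},\{4\}$ in $K_4$. Hence no bipartition as in your third step exists. Your fallback (``a fixed structure'') is too unspecified to repair this.

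\textbf{The missing idea} is simply to put the $x$--$y$ edges into $G_1$ instead. Let $H$ be the family of sets avoiding $\{x,y\}$. Define:
\begin{itemize}
\item $G_1$: the edges inside $H$, together with the edges between the $x$-sets and the $y$-sets not containing $x$;
\item $G_2$: all edges between $H$ and the sets meeting $\{x,y\}$.
\end{itemize}
These two graphs cover every edge. The $x$-sets and the $y$-sets not containing $x$ are both independent, so every edge not incident to $H$ runs between these two families. $G_2$ is bipartite with sides $H$ and its complement. $G_1$ has no edge leaving $H$, so it is a disjoint union of $KG(3k-1,k)$ and a bipartite graph. Hence $G_1$ is triangle-free with $\chi_f(G_1)=\max\{(3k-1)/k,2\}=(3k-1)/k$. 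This is exactly the paper's construction, and no altered homomorphism is needed.
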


\begin{proof}
	
	Let $G = KG(3k+1,k)$ for some $k$, and let $H$ be the subgraph induced by $\{ v \in V(G) \mid 3k \notin v \wedge 3k+1 \notin v\}$. This is easily seen to be isomorphic to the smaller Kneser graph $KG(3k - 1, k)$. Let $A = \{ v \in V(G) \mid 3k \in v\}$ and $B = \{ v \in V(G) \mid 3k + 1 \in v\} \setminus A$. Since every element of $A$ (resp. $B$) contains $3k$ (resp. $3k + 1$), these sets are both independent in $G$.

	
	Construct the covering in the following way. Let the first covering graph contain the edges inside $H$, as well as the edges connecting $A$ and $B$. This has fractional chromatic number $\frac{3k-1}{k}$ because of $H$. Let the second covering graph contain the edges between $H$ and $A \cup B$. Clearly, this is a bipartite graph between $H$ and $A \cup B$.
\end{proof}

\begin{figure} \label{fig:construction}
	\centering
	\captionsetup{justification=centering}
	\begin{subfigure}{0.35\textwidth}
		\resizebox{\linewidth}{!}{
		\begin{tikzpicture}
			\input{figure1.tex}
		\end{tikzpicture}}
		\caption{The construction \\ of Theorem \ref{thm:construction}}
	\end{subfigure}
	\qquad
	\begin{subfigure}{0.55\textwidth}
		\resizebox{\linewidth}{!}{
		\begin{tikzpicture}
			\input{figure2.tex}
		\end{tikzpicture}}
		\caption{The construction \\ of Theorem \ref{thm:construction2}}
	\end{subfigure}
	
\end{figure}


From here, we can consider slightly larger small cases as well. For the case of covering $KG(n,3)$ by two triangle-free graphs, a recent result by Heath et al. proved that $KG(13,3)$ cannot be covered by two triangle-free graphs. Taking the construction from Theorem \ref{thm:construction} with $k = 3$ shows that $KG(10,3)$ however can be covered in this way. Therefore, only the cases of $KG(11,3)$ and $KG(12,3)$ remain unresolved.

Instead of asking about coverings by two graphs, we can also ask what is the smallest number $k$ such that $KG(k,2)$ can be covered by three $2.5$-colorable graphs. In Theorem \ref{thm:construction2}, we provide a construction for covering $KG(3k + 6, k)$ by three graphs.


For the case of three covering graphs, the construction in Theorem \ref{thm:construction} can be improved in the following way. We do not know of an asymptotic improvement.


\begin{thm} \label{thm:construction2}
	Let $k > 1$ be an integer, then $KG(3k+6,k)$ can be covered by three graphs, all triangle-free, with fractional chromatic numbers $\frac{3k-1}{k}$, $2.5$ and $2$.

\end{thm}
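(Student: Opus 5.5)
The plan is to imitate the proof of Theorem~\ref{thm:construction}, now with a set of seven ``special'' elements and one extra covering graph. Let $G = KG(3k+6,k)$ and $S = \{3k, 3k+1, \dots, 3k+6\}$, so $|S| = 7$. Let $H$ be the subgraph induced by the vertices disjoint from $S$; it is isomorphic to $KG(3k-1,k)$. Let $R$ be the set of vertices meeting $S$. I would use three covering graphs:
\begin{enumerate}
\item the edges of $H$, together with a bipartite set of edges inside $R$;
\item a triangle-free, $\tfrac52$-colorable set of edges inside $R$;
\item all edges between $V(H)$ and $R$, which form a bipartite graph.
\end{enumerate}
The first graph is a disjoint union of $H$ and a bipartite graph. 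Hence it is triangle-free when $k>1$ (because $3k-1<3k$, so $KG(3k-1,k)$ contains no three pairwise disjoint $k$-sets), and its fractional chromatic number is $\max\{(3k-1)/k,2\} = (3k-1)/k$. The third graph is clearly bipartite. So the whole proof reduces to covering $G[R]$ by one bipartite graph and one triangle-free $\tfrac52$-colorable graph.

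For this reduction I would use the map $v \mapsto v\cap S$. It sends $G[R]$ homomorphically into the graph $D$ whose vertices are the nonempty subsets of $S$ of size at most $k$, with two subsets adjacent when they are disjoint. By the argument of Theorem~\ref{thm:graph-class-defd-by-homomorphism} applied to each covering graph separately, it suffices to cover $D$ by a bipartite graph and a graph admitting a homomorphism to $C_5 = KG(5,2)$. Pulling back preserves bipartiteness, $C_5$-colorability (hence $\chi_f\le 5/2$), and triangle-freeness, since a graph mapping to $C_5$ is triangle-free.

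Covering $D$ is a finite problem on subsets of a 7-set. I would split $S = T \cup \{a,b\}$ with $|T|=5$ and identify $T$ with the ground set of $KG(5,2)\cong C_5$. Subsets $X$ with $X\cap\{a,b\}\neq\emptyset$ would be sorted into two classes according to whether $a\in X$, or $b\in X$ and $a\notin X$. Each class is independent, so, exactly as with $A$ and $B$ in Theorem~\ref{thm:construction}, the edges between these classes and the $T$-only subsets, and between the two classes, can be arranged into the bipartite graph. The $T$-only subsets would go to the $C_5$-colorable graph. For this I need a map sending each nonempty $X\subseteq T$ of size at most $k$ to a $2$-subset of $T$ so that disjoint sets go to disjoint $2$-sets. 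Where the map fails, the offending edges must be shifted into the bipartite part.

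This last step is the main obstacle. Singletons and large subsets of $T$ do not map to $KG(5,2)$ in an obvious way. For example, all five singletons of $T$ are pairwise disjoint and form a $K_5$, which certainly cannot be $C_5$-colored. So the real work is to choose the split between the bipartite graph and the $C_5$-graph on the $T$-part carefully, for instance by using a second bipartition keyed on a distinguished element of $T$. I expect to find this by a careful case analysis on $|X\cap S|$, possibly guided by a small computer check for $k=2$, after which the same pattern should work for all $k$. That will likely require redistributing some elements of $S$ between the roles of $T$ and $\{a,b\}$ if the first choice fails.
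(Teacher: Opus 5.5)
Your plan does not merely leave the flagged step open. As set up, it cannot succeed.

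You put every edge between $V(H)$ and $R$ into the third graph, and you require the first graph to be $H$ plus a \emph{bipartite} piece. This forces $G[R]$ to be the union of a bipartite graph and a triangle-free graph. That is impossible as soon as $G[R]\supseteq K_5$: a bipartite subgraph of $K_5$ lies in some $K_{s,5-s}$, and its complement $K_s\cup K_{5-s}$ contains a triangle. Such cliques do occur for small $k$:
\begin{itemize}
\item for $k=3$ (so $S=\{9,\dots,15\}$), the sets $\{1,2,9\},\{3,4,10\},\{5,6,11\},\{7,8,12\},\{13,14,15\}$ form a $K_5$ in $G[R]$;
\item for $k=2$, the sets $\{1,6\},\{2,7\},\{3,8\},\{4,9\},\{5,10\},\{11,12\}$ even form a $K_6$.
\end{itemize}
The auxiliary graph $D$ fails for every $k$, since the seven singletons of $S$ form a $K_7$. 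So no case analysis or computer search will find the cover of $D$ you want.

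There is a second, independent error. The edges between the $a$-class and the $b$-class, and the edges from these classes to the $T$-only sets, cannot all share one bipartite graph. Pairwise disjoint sets $\{a\}\cup X$, $\{b\}\cup Y$, $\{t\}\cup Z$ with $X,Y,Z\subseteq[3k-1]$ of size $k-1$ form a triangle. In Theorem~\ref{thm:construction} these two kinds of edges lie in different covering graphs.

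The missing idea is to use all three graphs inside $R$. Write $B_1$ and $B_2$ for your $a$- and $b$-classes.
\begin{itemize}
\item First graph: since $(3k-1)/k\ge 5/2$ for $k\ge2$ (this is where $k>1$ enters), it may carry a $C_5$-colorable component in addition to $H$ and the bipartite $B_1$--$B_2$ edges.
\item Second graph: it takes the edges between $H$ and $B_1\cup B_2$ as a separate bipartite component.
\item Sorting the $T$-part: do not map the vertices meeting $S$ only in $T$ to $2$-subsets of $T$. Instead, sort them by their least element of $T$ into five independent classes $A_0,\dots,A_4$.
\item Splitting the $K_5$ between these classes: the pentagon of edges $A_iA_{i+1}$ goes into the first graph, and the pentagram of edges $A_iA_{i+2}$ goes into the second graph (indices mod $5$). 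Each maps to $C_5$ via $v\mapsto i$ for $v\in A_i$.
\item Third graph: the bipartite graph between $H\cup B_1\cup B_2$ and $\bigcup_i A_i$.
\end{itemize}
Each covering graph is then a vertex-disjoint union of triangle-free pieces, with fractional chromatic numbers $(3k-1)/k$, $5/2$ and $2$.
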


\begin{proof}
	
	We use a similar construction as in Theorem \ref{thm:construction}. Let $G = KG(3k+6, k)$ for some $k$, and let $H = \{ v \in V(G) \mid \{3k, 3k + 1, \dots, 3k + 6\} \cap v = \emptyset\}$. This is once again easily seen to be isomorphic to the smaller Kneser graph $KG(3k - 1, k)$. Let $B_1 := \{ v \in V(G) \mid 3k \in v\}$ and $B_2 := \{ v \in V(G) \mid 3k+1 \in v\} \setminus B_1$. Each vertex of $B_1$ (resp. $B_2$) contains the element $3k$ (resp. $3k + 1$), therefore, these are independent sets in $G$. For $i = 0, \dots, 4$, let $A_i = \{v \in V(G) \mid 3k+2+i \in v\} \setminus \left(\cup_{j = 0}^{i-1} A_j \cup B_1 \cup B_2 \right)$. Since every vertex of $A_i$ contains $3k + 2 + i$, each $A_i$ is also independent.

		
	Construct the covering of $G$ in the following way. Let the first covering graph $G_1$ contain the edges inside $H$, the edges connecting $B_1$ and $B_2$, as well as the edges connecting $A_i$ and $A_{i+1}$ for each $i \in \{0, \dots, 4\}$, where the addition is understood modulo $5$. Let $G_2$ contain the edges between $A_i$ and $A_{i+2}$, where the addition is understood modulo $5$, and the edges connecting $H$ and $B_1 \cup B_2$. Finally, let $G_3$ contain the edges between $H \cup B_1 \cup B_2$ and $\cup_{i = 0}^4 A_i$. It is easy to see that none of the three graphs contain triangles.

	
	In this construction, $G_1$ will have fractional chromatic number $\frac{3k-1}{k}$, $G_2$ will have $2.5$ due to containing a copy of $C_5$, and $G_3$ will be bipartite.
\end{proof}

The constructions in Theorems \ref{thm:construction} and \ref{thm:construction2} can be generalized to more colors, by adding more independent sets. Specifically, when we consider the case of $c$ colors, we can use the construction for $c-1$ colors, and add another $R_{c-1}(3) - 1$-many extra independent sets, where $R_{c-1}(3)$ is the $(c-1)$-colored Ramsey number for a monochromatic triangle, and use the colors $1, \dots, c-1$ to color the edges connecting these new independent sets, in a way similar to the $A_i$-s in Theorem \ref{thm:construction2}. Then we can use the extra color for the bipartite graph connecting the old vertices with the new ones. The details of this generalization are left to the reader.

A small improvement can be made on the construction of Theorem \ref{thm:construction2} if we assume $k \equiv 1$ modulo $5$. In this case, $KG(3k + 7, k)$ is also coverable by three triangle-free graphs, in the following way. Let $k = 5\ell +1$ and let the vertex sets $A_1, \dots, A_5 \subseteq V(KG(3k + 7, k))$ be defined by \[A_i := \Bigl\{ v \Bigm| |v \cap \{(3\ell + 2)(i-1) + 1, \dots, (3\ell + 2)i  \}| \geq \ell + 1 \Bigr\} \setminus \bigcup_{j=1}^{i-1} A_j\] Every vertex of $KG(15\ell + 10, 5\ell + 1)$ will belong to exactly one of these sets, as per the pigeonhole principle, and they are easily seen to be homomorphic to $KG(3\ell + 2, \ell + 1)$, implying that they are all triangle-free. Thus, let the first covering graph, $G_1$, consist of the edges inside the $A_i$-s, let $G_2$ consist of the edges connecting $A_i$ with $A_{i+1}$ (with the addition understood modulo $5$), and let $G_3$ consist of the edges connecting $A_i$ with $A_{i+2}$. It is left to the reader to verify that these graphs have fractional chromatic numbers at most $\frac{3\ell + 2}{\ell + 1}$, $2.5$ and $2.5$ respectively.

Importantly, we do not know of an asymptotic improvement, even if we allow a large amount of colors. The following question therefore arises naturally.

\begin{question}
	Let $\eps > 0$ be a real number, and $c \geq 3$ an integer. Is there a covering of the edges of $KG\left( (3+\eps)k, k \right)$ with $c$ triangle-free graphs, for all $k$?
\end{question}

This question, along with others of a similar nature, will be considered in Section \ref{sec:eightconj}, where it is equivalent to the converse of Question \ref{que:eight}, part (1).



\section{Analogous questions with the local chromatic number} \label{sec:local}

In this section, we consider the graph classes $\{\psi \leq k\}$ for $k$ a positive integer. In this case, we find an upper bound on $c_{\{\psi \leq k\}}$ in terms of $\psi$, and a lower bound on $c_{\{\psi \leq k\}}$ in terms of $\omega$.


To prove the upper bound of Theorem \ref{thm:INTRO_local-upper}, we first consider the simplest non-trivial case, where $k = 3$. In addition, we restrict our attention to the case where $G$ is one of the graphs $U(m,r)$. Due to Corollary \ref{cor:upper-bound-reasoning}, finding an upper bound on $c_{\{\psi \leq k\}}(U(m,r))$ for all $m$ immediately implies that the same upper bound holds for all graphs $G$ with $\psi(G) = r$.

\begin{thm} \label{thm:Umr-covering}
	For each $r$, there is a decomposition of the edges of $U(m,r)$ into $(r-1)^2$-many graphs of local chromatic number at most three.
\end{thm}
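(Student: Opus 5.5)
The idea is to keep the canonical coloring $(\alpha,A)\mapsto\alpha$ of $U(m,r)$, which is proper, and split the edges so that in each piece every vertex sees at most one color ``above'' itself and at most one color ``below'' itself. Then every closed neighborhood in a piece contains at most three colors. To make ``above'' and ``below'' precise, I would use the natural order on $[m]$.

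For a vertex $(\alpha,A)$, list $A$ in increasing order as $a_1<a_2<\dots<a_t$ with $t\le r-1$, and call $a_s$ the $s$-th element of $A$. Take an edge between $(\alpha,A)$ and $(\beta,B)$. Then $\alpha\in B$ and $\beta\in A$, so $\alpha\neq\beta$, and we may name the endpoints so that $\alpha<\beta$. Define its type $(i,j)\in[r-1]\times[r-1]$ as follows:
\begin{itemize}
\item $i$ is the position of the larger color $\beta$ in the set $A$ of the smaller-colored endpoint;
\item $j$ is the position of the smaller color $\alpha$ in the set $B$ of the larger-colored endpoint.
\end{itemize}
This rule is symmetric in how the edge is presented, because the orientation is fixed by comparing the colors. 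Hence every edge receives exactly one type, and the edge sets $E_{ij}$ of the graphs $G_{ij}$ of type $(i,j)$ form a partition of $E(U(m,r))$ into $(r-1)^2$ parts. Some parts may be empty, which is harmless, or we may drop them.

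Next I would verify that the coloring by first coordinates has local value at most $3$ on each $G_{ij}$. Fix a vertex $(\gamma,C)$.
\begin{itemize}
\item A neighbor $(\beta,B)$ in $G_{ij}$ with $\beta>\gamma$ must satisfy that $\beta$ is the $i$-th element of $C$. So all such neighbors carry the single color $c_i$.
\item A neighbor $(\alpha,A)$ with $\alpha<\gamma$ must have $\alpha$ equal to the $j$-th element of $C$. So all such neighbors carry the single color $c_j$.
\end{itemize}
Thus the closed neighborhood of $(\gamma,C)$ in $G_{ij}$ sees only the colors $\gamma$, $c_i$ and $c_j$. The coloring is proper on $G_{ij}$ because it is proper on $U(m,r)$. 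Therefore $\psi(G_{ij})\le 3$, which gives the required decomposition.

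The only real difficulty is choosing a consistent labelling of each undirected edge by an ordered pair of positions, so that the ``up'' and ``down'' neighbors are each forced into one color. Orienting every edge from the smaller to the larger first coordinate resolves this. If that orientation led to double counting, I would instead try labelling via the unordered pair of positions together with a tie-break.
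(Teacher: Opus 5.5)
Your proposal is correct and is essentially the paper's proof. The paper likewise orients each edge by $\alpha<\beta$, labels it by the position of $\beta$ in $A$ and of $\alpha$ in $B$, and uses the coloring $(\alpha,A)\mapsto\alpha$ so that each vertex has at most one ``up'' color and one ``down'' color. The only cosmetic difference is that the paper counts the position of $\alpha$ in $B$ from the top, $j=|\{b\in B : b>\alpha\}|$, while you count from the bottom; either choice pins down a single element of $C$ for a fixed vertex $(\gamma,C)$, so both work.
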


\begin{proof}
	Let $r$ be fixed. For each $i, j \in \{0, \dots, r - 2 \}$, let $E_{i,j}$ be defined in the following way.
	
	\begin{equation*} 
		E_{i,j} := \left\{ (\alpha, A) \sim (\beta, B) : \begin{aligned}
				& \alpha \in B, \beta \in A \\ & |\{ a \in A | a < \beta\}| = i \\ & |\{ b \in B | b > \alpha\}| = j  \\
				& \alpha < \beta 
		\end{aligned} \right\}
	\end{equation*}
	
	In other words, each edge of $U(m,r)$ is put into one of the $E_{i,j}$-s based on how many elements of $A$ are less than $\beta$, and how many elements of $B$ are more than $\alpha$. Since $\beta \in A$ and $\alpha \in B$, the cases of $i = r - 1$ or $j = r - 1$ cannot occur. The total number of covering graphs is exactly $(r-1)^2$, the number of possible $(i,j)$ pairs.
	
	We claim that these $E_{i,j}$-s cover all edges of $U(m,r)$, and that each $E_{i,j}$ has local chromatic number at most three, with the local coloring of value $3$ given by $(\alpha, A) \mapsto \alpha$. The first assertion is clear, each edge falls into exactly one of the $E_{i,j}$-s, depending on the properties of the endpoints. Now let us fix some pair $(i,j)$ and consider the graph induced by $E_{i,j}$. 
	
	We claim that the neighborhood of each vertex $(\gamma, C)$ is colored with at most two colors. Indeed, suppose $(\alpha, A)$ is a neighbor of $(\gamma, C)$ in $E_{i,j}$, with $\alpha < \gamma$. Then $\alpha$ must be exactly the $(|C| - j)$-th element of $C$ by the definition of $E_{i,j}$. If instead $(\beta, B)$ is a neighbor of $(\gamma, C)$ with $\gamma < \beta$, then $\beta$ can only be the $(i + 1)$-st element of $C$. Therefore, only these two elements of $C$ may be the colors of a neighbor of $(\gamma, C)$ in $E_{i,j}$. Therefore, each $E_{i,j}$ has local chromatic number at most three, and the theorem is proved.
	
\end{proof}

Now let $G$ be any graph with $\psi(G) = r$. We know that $G \to U(m, r)$ for some $m$. This implies that $c_{\{ \psi \leq 3\}}(G) \leq c_{\{ \psi \leq 3\}}(U(m,r))$, by Theorem \ref{thm:graph-class-defd-by-homomorphism}. Therefore, the following corollary holds.

\begin{cor} \label{cor:local-3}
	For all graphs $G$, we have \[
		c_{\{\psi \leq 3\}}(G) \leq (\psi(G) - 1)^2
	\]
\end{cor}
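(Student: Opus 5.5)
The corollary follows by transferring the decomposition of Theorem \ref{thm:Umr-covering} from the model graphs $U(m,r)$ to an arbitrary graph via a homomorphism, using Theorem \ref{thm:graph-class-defd-by-homomorphism}. The first thing to check is that this theorem applies to $\mathcal P = \{\psi \leq 3\}$. As noted in Section \ref{sec:prelim}, $G$ has a local coloring of value at most $3$ exactly when $G \to U(m,3)$ for some $m$. Hence $\{\psi \leq 3\}$ is the graph class defined by the model graphs $\mathcal Z = \{U(m,3) \mid m \in \N\}$, and Theorem \ref{thm:graph-class-defd-by-homomorphism} gives $c_{\{\psi\le 3\}}(G) \le c_{\{\psi \le 3\}}(H)$ whenever $G \to H$.

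Now let $G$ be any graph with at least one edge, and set $r = \psi(G)$. Edgeless graphs I dispose of separately, using the convention that the cover number is at least one, or simply noting that they are trivially covered by a single graph of $\{\psi\le 3\}$. Fix a proper coloring of $G$ of local value $r$, and let $m$ be at least the number of colors used, so that $m \geq r$. The map $v \mapsto (\phi(v), \phi(N(v)))$ is then a homomorphism $G \to U(m,r)$. Here $\phi(N(v))$ is the set of colors on the open neighborhood, which has at most $r-1$ elements and does not contain $\phi(v)$. An edge $vw$ is sent to an edge because $\phi(v) \in \phi(N(w))$ and $\phi(w) \in \phi(N(v))$. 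This is just the equivalence already asserted in the paper.

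Theorem \ref{thm:Umr-covering} gives $c_{\{\psi\le 3\}}(U(m,r)) \le (r-1)^2$. Chaining the two inequalities yields $c_{\{\psi\le3\}}(G) \le (\psi(G)-1)^2$.

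The only point needing any care is the edge cases, and none of them is a real obstacle. When $r = 2$, the bound is $1$ and $G$ is bipartite, so it is consistent. When $r \le 3$, the right side $(r-1)^2$ is at least $1$ and the statement is trivial anyway, since $G$ itself lies in $\{\psi\le 3\}$.
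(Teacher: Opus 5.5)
Your argument is correct and is the paper's own proof. You map $G$ into $U(m,\psi(G))$, apply Theorem~\ref{thm:graph-class-defd-by-homomorphism} to the class $\{\psi\le 3\}$ with model graphs $U(m,3)$, and then invoke Theorem~\ref{thm:Umr-covering}.

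One small slip: for edgeless $G$ the bound is $(1-1)^2=0$, so your claim that $(r-1)^2\ge 1$ for $r\le 3$ fails at $r=1$. Neither the convention that cover numbers are at least one nor a cover by a single graph rescues that case; it needs the empty cover, or excluding edgeless graphs as the paper tacitly does.
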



We can generalize the above arguments from $\{\psi \leq 3\}$ to all graph families of the form $\{\psi \leq k\}$ for some integer $k \geq 3$. The following theorem, mentioned in the introduction as Theorem \ref{thm:INTRO_local-upper}, can be proved. It is clear that this is a direct generalization of Corollary \ref{cor:local-3}.

\begin{thm} \label{thm:local-upper-general}
	For all graphs $G$ and integers $k \geq 3$, we have \[
		c_{\{\psi \leq k\}}(G) \leq \left\lceil \frac{\psi(G) - 1}{\left\lfloor \frac{k-1}{2} \right \rfloor} \right\rceil \cdot \left\lceil \frac{\psi(G) - 1}{\left\lceil \frac{k-1}{2} \right \rceil} \right\rceil
	\]
\end{thm}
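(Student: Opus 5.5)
We generalize the proof of Theorem \ref{thm:Umr-covering}. By Corollary \ref{cor:upper-bound-reasoning} and Theorem \ref{thm:graph-class-defd-by-homomorphism}, it suffices to show the following: for $r = \psi(G)$ and every $m$, the graph $U(m,r)$ has a decomposition into
\[
\left\lceil \frac{r-1}{s} \right\rceil \cdot \left\lceil \frac{r-1}{t} \right\rceil
\]
graphs of local chromatic number at most $k$, where $s = \lfloor \frac{k-1}{2} \rfloor$ and $t = \lceil \frac{k-1}{2} \rceil$, so that $s + t = k - 1$.

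\textbf{The partition.} For an edge $(\alpha,A)\sim(\beta,B)$ with $\alpha<\beta$, define as before
\[
i = |\{a \in A \mid a < \beta\}|, \qquad j = |\{b \in B \mid b > \alpha\}|,
\]
so that $0 \le i, j \le r-2$. We group the values of $i$ into $\lceil \frac{r-1}{s}\rceil$ blocks $I_p = \{ps, \dots, ps+s-1\}$, and the values of $j$ into $\lceil \frac{r-1}{t}\rceil$ blocks $J_q = \{qt, \dots, qt+t-1\}$. Let $E_{p,q}$ be the set of edges with $i\in I_p$ and $j\in J_q$. Each edge lies in exactly one class, which gives the claimed count.

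\textbf{The local coloring.} In each $E_{p,q}$ we use the coloring $(\alpha,A)\mapsto\alpha$, which is proper because adjacent vertices satisfy $\alpha\in B$ and $\beta\in A$, hence $\alpha \neq \beta$. Fix a vertex $(\gamma,C)$.

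If $(\alpha,A)$ is a neighbor with $\alpha<\gamma$, then $(\gamma,C)$ plays the role of $(\beta,B)$. Here $\alpha\in C$ and the number of elements of $C$ exceeding $\alpha$ is $j\in J_q$. So $\alpha$ is the element of $C$ at rank $|C|-j$ from below, and $\alpha$ ranges over at most $t$ values.

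If $(\beta,B)$ is a neighbor with $\gamma<\beta$, then $(\gamma,C)$ plays the role of $(\alpha,A)$. Here $\beta\in C$ and the number of elements of $C$ below $\beta$ is $i\in I_p$. So $\beta$ is the $(i+1)$-st element of $C$, and it ranges over at most $s$ values.

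The closed neighborhood therefore sees at most $1+s+t=k$ colors. This gives local value at most $k$, so $\psi(E_{p,q})\le k$.

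\textbf{Matching the block sizes to the factors.} The one point needing care is which side gets blocks of size $s$ and which gets size $t$. The rank $i$ controls the colors of larger neighbors, and its blocks have size $s=\lfloor\frac{k-1}{2}\rfloor$; this produces the factor with denominator $\lfloor\frac{k-1}{2}\rfloor$. The rank $j$ controls the colors of smaller neighbors, and its blocks have size $t$; this produces the factor with denominator $\lceil\frac{k-1}{2}\rceil$. Since the construction only needs $s+t\le k-1$, the assignment is consistent with the stated product.

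For $k=3$ we have $s=t=1$ and the bound becomes $(r-1)^2$, recovering Theorem \ref{thm:Umr-covering}. The case $r \le k$ is trivial, since then $G$ itself has $\psi\le k$ and the bound is at least $1$.
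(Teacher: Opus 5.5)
Your proposal is correct and is essentially the paper's own proof: you reduce to $U(m,r)$, group the rank $i$ into blocks of size $\lfloor\frac{k-1}{2}\rfloor$ and the rank $j$ into blocks of size $\lceil\frac{k-1}{2}\rceil$, and use the coloring $(\alpha,A)\mapsto\alpha$, under which the larger- and smaller-colored neighbors of $(\gamma,C)$ fall on at most $s$ and $t$ fixed ranks of $C$ respectively. Your concern about which side gets which block size is moot, because the product of the two factors is symmetric.
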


\begin{proof}
	As we have seen previously, it is sufficient to prove this in the case of $G = U(m,r)$ for some $m, r$, where $r = \psi(G)$. We will let $E_{i,j}$ for $i \in \{0, \dots, r - 2\}$ and $j \in \{0, \dots, r-2\}$ be defined as in the proof of Theorem \ref{thm:Umr-covering}.

	Define $a := \left\lfloor \frac{k-1}{2} \right \rfloor$ and $b := \left\lceil \frac{k-1}{2} \right \rceil$. Note that $a + b + 1 = k$ always holds for these numbers. Define the sets $I = \{0, \dots, r - 2\}$ and $J = \{0, \dots, r - 2\}$. These are exactly the possible values of $i$ and $j$ from the proof of Theorem \ref{thm:Umr-covering}.
	
	Divide $I$ into $a$-element subsets as $I = I_1 \cup \dots \cup I_{\left\lceil \frac{r-1}{a} \right\rceil}$ and $J$ into $b$-element subsets as $J = J_1 \cup \dots \cup J_{\left\lceil \frac{r-1}{b} \right\rceil}$. Now define the edge sets $\hat{E}_{i,j}$ as follows.
	
	\begin{equation*} 
		\hat{E}_{i,j} := \bigcup_{i^* \in I_i, j^* \in J_j} E_{i^*, j^*}
	\end{equation*}
	
	We claim that each $\hat{E}_{i,j}$ has local chromatic number at most $k$, and that the local coloring of value $k$ is given by $(\alpha, A) \mapsto \alpha$. Indeed, fix a vertex $(\gamma, C) \in V(U(m,r))$, 
	and suppose that $(\alpha, A)$ is a neighbor of $(\gamma, C)$ in $\hat{E}_{i,j}$ with $\alpha < \gamma$. This restricts the possible values of $\alpha$, specifically, $\alpha$ can only be the $(|C| - j^*)$-th element of $C$ for some $j^* \in J_j$. This is a total of $b$-many potential colors for a neighbor of $(\gamma, C)$. Alternatively, if $(\beta, B)$ is a neighbor of $(\gamma, C)$ in $\hat{E}_{i,j}$ with $\gamma < \beta$, then similarly $\beta$ can only be the $(i^* + 1)$-st element for some $i^* \in I_i$, which allows for a total of $a$-many potential colors for each neighbor. In total, the neighbors of $(\gamma, C)$ may have at most $a + b = k - 1$ distinct colors, implying that $\hat{E}_{i,j}$ has local chromatic number at most $k$.
	
	The $\hat{E}_{i,j}$-s clearly cover $G$, and they form covering of $G$ by $\left\lceil \frac{r - 1}{a} \right\rceil \cdot \left\lceil \frac{r - 1}{b} \right\rceil$-many graphs in $\{\psi \leq k\}$, proving the theorem.
\end{proof}

Let us move on to the proof of the lower bound on $c_{\{\psi \leq k\}}$ in Theorem \ref{thm:INTRO_local-upper}. This lower bound is given in terms of a function $f$, which is defined recursively.

\begin{thm} \label{thm:local-lower}
	Let $k \geq 3$ be an integer, and $G$ be any graph. The \[
		c_{\{\psi \leq k\}}(G) \geq f_k^{-1}(\omega(G))
	\] where $f_k$ denotes the function defined recursively by $f_k(1) = k$ and $f_k(c) = c (k-1) f_k(c-1)$ for $c \geq 2$.
\end{thm}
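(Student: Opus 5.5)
Since every graph class considered here is closed under subgraphs, and a clique $K_\omega$ is a subgraph of $G$ (so $c_{\{\psi\le k\}}(K_\omega)\le c_{\{\psi\le k\}}(G)$), it suffices to show that $K_n$ cannot be covered by $c$ graphs of local chromatic number at most $k$ whenever $n > f_k(c)$. Equivalently, by induction on $c$: if $K_n$ is covered by $G_1,\dots,G_c$ with $\psi(G_i)\le k$, then $n\le f_k(c)$. Reading $f_k^{-1}(\omega)$ as the least $c$ with $f_k(c)\ge\omega$, this is exactly the claim. This is a Ramsey-type argument, and the recursion $f_k(c)=c(k-1)f_k(c-1)$ suggests the structure: find a vertex and a large set of its neighbours on which one colour class is ``controlled'', then recurse with one fewer graph.

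\textbf{Base case.} For $c=1$, $K_n$ itself must have $\psi\le k$, and $\psi(K_n)=n$, so $n\le k=f_k(1)$.

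\textbf{Inductive step.} Fix local colorings $\phi_i$ of $G_i$ of value at most $k$, so every closed neighbourhood in $G_i$ sees at most $k$ colours of $\phi_i$. Pick any vertex $v$ of $K_n$. Its $n-1$ other vertices are each joined to $v$ by an edge lying in some $G_i$, so by pigeonhole some $G_i$ contains at least $(n-1)/c$ of these edges. Call the corresponding neighbour set $N$. In $G_i$ the vertices of $N$ are neighbours of $v$, so $\phi_i$ takes at most $k-1$ values on $N$, since the colour $\phi_i(v)$ is excluded because $\phi_i$ is proper. Hence some colour class $N'\subseteq N$ has size at least $(n-1)/(c(k-1))$. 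All vertices of $N'$ share the same $\phi_i$-colour, so no edge of $G_i$ lies inside $N'$. Therefore the clique on $N'$ is covered by the remaining $c-1$ graphs restricted to $N'$, and each of these still has $\psi\le k$ because the class is closed under subgraphs.

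By induction $|N'|\le f_k(c-1)$, which gives $n-1\le c(k-1)f_k(c-1)=f_k(c)$, i.e.\ $n\le f_k(c)+1$.

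The main obstacle is this off-by-one. The recursion yields $f_k(c)+1$ rather than $f_k(c)$. I would first try to absorb it by a slightly sharper count: also include $v$ in the set, or argue that the largest colour class together with $v$'s own colour structure gives one extra vertex. Failing that, I would note that the stated bound only needs $n\le f_k(c)$ up to the exact inverse convention, and check whether the inequality still holds with $f_k(c)+1$. This is plausible because for $c\ge2$, $f_k(c)\ge 2(k-1)k$ is large, and a more careful base case such as $K_{k}$ covered by one graph would absorb the $+1$. The asymptotic order $O(c!\,k^c)$ is immediate from unrolling the recursion.
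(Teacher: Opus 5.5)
Your strategy is the same as the paper's: fix $v$, split $V(K_n)\setminus\{v\}$ according to which $G_i$ covers the edge to $v$ and then by $\phi_i$-colour, and apply induction to a colour class, which is a clique covered by the other $c-1$ graphs. As it stands, however, the induction does not close. The count gives only $n-1\le c(k-1)f_k(c-1)=f_k(c)$, while the next level of the induction needs $n\le f_k(c)$. If you settle for $n\le f_k(c)+1$ and feed that back in, you get $n\le f_k(c)+c(k-1)+1$, and the error grows.

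None of your proposed repairs works. First, $v$ cannot be added to $N'$: $v$ is joined in $G_i$ to every vertex of $N'$, so the clique on $N'\cup\{v\}$ is not covered by the remaining $c-1$ graphs. Second, the base case is already sharp; the loss happens at each inductive step. Third, the inverse convention does not absorb it. The theorem is meant as $f_k(c)\ge\omega(G)$ for $c=c_{\{\psi\le k\}}(G)$ (the paper says explicitly that it will show $f_k(c)\ge r$), and $n\le f_k(c)+1$ does not give this when $\omega(G)=f_k(c)+1$. The paper's written proof contains the same slip unnoticed: its sets $A_i^{(j)}$ cover only $V(K_r)\setminus\{v\}$, yet it concludes $r\le c(k-1)f_k(c-1)$.

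The missing ingredient is an analysis of the equality case. Assume the bound for $c-1$ and suppose $n=f_k(c)+1$.
\begin{itemize}
\item At every vertex $v$ all estimates are tight: $|N_{G_i}(v)|=(k-1)f_k(c-1)$ for each $i$, and $\phi_i$ takes exactly $k-1$ values on $N_{G_i}(v)$, each on exactly $f_k(c-1)$ vertices.
\item An entire colour class $\phi_1^{-1}(x)$ is independent in $G_1$, hence a clique covered by $G_2,\dots,G_c$, so it has at most $f_k(c-1)$ vertices.
\item Every vertex $w$ has a $G_1$-neighbour $v$, since its $G_1$-degree is $(k-1)f_k(c-1)>0$. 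The vertices of $N_{G_1}(v)$ with colour $\phi_1(w)$ already number $f_k(c-1)$.
\item Hence every colour class of $\phi_1$ has exactly $f_k(c-1)$ vertices. So $f_k(c-1)$ divides $n=c(k-1)f_k(c-1)+1$, which is impossible because $f_k(c-1)\ge k\ge 3$.
\end{itemize}
Thus $n\le f_k(c)$, and the induction closes with the stated $f_k$.

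Alternatively, running your argument verbatim with $g(1)=k$ and $g(c)=c(k-1)g(c-1)+1$ is a valid induction. It proves a slightly weaker statement than the one given, though.
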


\begin{proof}
	First, notice that $c_{\{\psi \leq k\}}(G) \geq c_{\{\psi \leq k\}}(K_{\omega(G)})$ for any graph $G$, since there always exists a homomorphism $K_{\omega(G)} \to G$. Therefore, let $r := \omega(G)$ and $c := c_{\{\psi \leq k\}}(K_r)$. We will prove that a function $f_k$ will satisfy $f_k(c) \geq r$, which will prove the theorem. In other words, we need to prove that if $K_r$ is covered by $c$-many graphs from $\{\psi \leq k\}$, then it may only have $f_k(c)$-many vertices. 
	
	We proceed by induction on $c$, starting with the base case, $c = 1$. This case is trivial, as a graph with $c_{\{\psi \leq k\}}(G) = 1$ must have $\psi(G) \leq k$, which means that $r \leq k$ must hold.
	
	
	Now suppose inductively that we already know the theorem for $c-1$. Once again, suppose that $K_r$ is covered by $c$ graphs $G_1, G_2, \dots, G_c$, each of which has local chromatic number at most $k$. Let $v \in V(K_r)$ be any vertex. Let $A_i := \{ w \in V(K_r) \mid vw \in E(G_i)\}$ for $i = 1, 2, \dots, c$. Since $G_1$ has local chromatic number at most $k$, the vertices in $A_1$ can have one of $k-1$ possible colors in $G_1$. Denote these color sets by $A_1^{(1)}, \dots, A_1^{(k-1)}$, and define the sets $A_i^{(j)}$ analogously for $i \in \{1, \dots, c\}$ and $j \in \{1, \dots, k-1\}$. The vertices in $A_1^{(1)}$ are all colored with the same color in $G_1$, hence they are a clique in $G_2 \cup \dots \cup G_c$. Thus, $(G_2 \cup \dots \cup G_c) \cap A_1^{(1)}$ is a complete graph covered by $(c-1)$-many graphs of local chromatic number at most $k$, consequently $A_1^{(1)}$ has at most $f_k(c-1)$ vertices. Summing this up for all $A_i^{(j)}$'s, we therefore have \[
		r \leq c \cdot (k-1) \cdot f_k(c-1) = f_k(c)
	\] which proves the theorem.
\end{proof}

Solving the recursion defining the functions $f_k$, one can conclude that $f_k(x)$ must be of the order $O(x! \cdot k^x)$, therefore, the proof of Theorem \ref{thm:INTRO_local-upper} is concluded.

The proof of Theorem \ref{thm:local-lower} illustrates very well the relationship between Ramsey theory and the study of cover numbers of this type. Indeed, the statement of Theorem \ref{thm:local-lower} can be restated in the following fashion.

\begin{cor}
	For any integer $k \geq 2$, for a large enough $r$, if the edges of the complete graph $K_r$ are colored with $c$ colors, then there is a monochromatic subgraph with local chromatic number at least $k + 1$.
\end{cor}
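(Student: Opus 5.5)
The corollary is a direct restatement of Theorem \ref{thm:local-lower}, so I will derive it from that theorem by contraposition. I first treat $k \geq 3$ and then handle $k = 2$ separately, since Theorem \ref{thm:local-lower} is only stated for $k \geq 3$.

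\textbf{Step 1: setting up the contraposition for $k \geq 3$.} Fix $c$ and $k \geq 3$, and take any $r > f_k(c)$. Suppose some $c$-coloring of $E(K_r)$ has no monochromatic subgraph of local chromatic number at least $k+1$. Then every color class $G_i$, taken with vertex set $V(K_r)$, satisfies $\psi(G_i) \leq k$. Adding isolated vertices does not change $\psi$, so this is harmless. Hence $c_{\{\psi \leq k\}}(K_r) \leq c$.

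\textbf{Step 2: reaching the contradiction.} The proof of Theorem \ref{thm:local-lower} shows directly that if $K_r$ is covered by $c$ graphs of $\{\psi\leq k\}$, then $r \leq f_k(c)$. This contradicts the choice $r > f_k(c)$.

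The only point needing care is that this proof establishes $r \leq f_k(c)$ for every $c$ admitting such a covering, not merely for the optimal $c$. The induction is phrased exactly that way. Alternatively, one can note that $f_k$ is increasing, so the optimal $c' \leq c$ gives $r \leq f_k(c') \leq f_k(c)$. In either case, any $r > f_k(c)$, i.e.\ $r = f_k(c)+1$, is large enough.

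\textbf{Step 3: the case $k = 2$.} Here Theorem \ref{thm:bip2-equivalence} shows that a graph has $\psi \leq 2$ exactly when it is bipartite (or edgeless). So we need a monochromatic non-bipartite subgraph. By Theorem \ref{thm:hhm}, $c_{\{\chi\leq 2\}}(K_r) = \lceil \log r\rceil$, so any $r > 2^c$ forces some color class to have chromatic number at least $3$. By Theorem \ref{thm:bip2-equivalence}, that class then has $\psi \geq 3$.

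\textbf{Main obstacle.} The argument is routine; the only real issue is this boundary case. Theorem \ref{thm:local-lower} does not cover $k = 2$, so that case must be routed through Theorem \ref{thm:hhm} and Theorem \ref{thm:bip2-equivalence}. Equivalently, one can check that the inductive argument of Theorem \ref{thm:local-lower} still works for $k = 2$ with $f_2(1) = 2$.
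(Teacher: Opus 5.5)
Your proposal is correct and matches the paper, which gives no separate proof and presents the corollary as the contrapositive of Theorem~\ref{thm:local-lower}; your separate treatment of $k=2$ via Theorems~\ref{thm:hhm} and~\ref{thm:bip2-equivalence} is a welcome addition, since the corollary is stated for $k\geq 2$ while Theorem~\ref{thm:local-lower} covers only $k\geq 3$. One small caveat on Step~2: the paper's inductive step actually bounds $r-1$ rather than $r$ (the chosen vertex $v$ lies in no $A_i$), so your exact threshold $r=f_k(c)+1$ relies on the theorem as stated rather than on its displayed proof, but since the corollary only asks for $r$ large enough, this does not affect your conclusion.
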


The proof of this lower bound has a very similar structure to the proofs of certain Ramsey-theoretic upper bounds, which makes sense, as the cover numbers we study here are, in some sense, inverses of analogous Ramsey numbers. However, it is to be noted that this direct proof gives a better bound than a reduction to Ramsey theory.

Indeed, one could also give a lower bound on $c_{\{\psi \leq k\}}$ by giving a lower bound on $c_{\{\omega \leq k\}}$ instead, which is a lower bound on $c_{\{\psi \leq k\}}$ by Theorem \ref{thm:coverno-bijection}. It is easy to see that the cover number $c_{\{\omega \leq k\}}$ is the inverse of the multicolor Ramsey number $R_c(k+1)$ with respect to the number of colors, $c$. However, even the recently improved, best upper bound from \cite{Balister&al2026} gives a higher estimate for $R_c(k+1)$ than the function $f_k(c)$ from the proof. Consequently, the proof presented here gives a stronger lower bound.

It is an open question whether it it possible to give a lower bound on $c_{\{\psi \leq k\}}$ that goes to infinity depending on $\psi(G)$. As it stands, it is possible that there exist graphs with high local chromatic number, that can nevertheless be covered by a constant number of $\{\psi \leq k\}$-graphs with a constant $k$.


\section{Some asymptotic questions} \label{sec:eightconj}

In Theorem \ref{thm:frac-upper-lower}, we have found an upper and a lower bound on $c_{\{\chi_f \leq \beta\}}$ in terms of some graph parameters and $\beta$. Suppose we have a fixed $\beta$. In that case, the lower bound depends only on the fractional chromatic number, but the upper bound also includes $\chi$, the ordinary chromatic number. Since $\chi$ can be arbitrarily large among graphs with constant fractional chromatic number, due to Theorem \ref{thm:lovasz-kneser}, there are families of graphs where the lower bound is constant, but the upper bound can be arbitrarily large.

Two such graph families have already been discussed. The family of Kneser graphs $KG(\alpha k, k)$ with fixed $\alpha > \beta$ have constant fractional chromatic number $\alpha$, but their chromatic number goes to infinity as $k \to \infty$. Similarly, the graphs $U(m,\alpha)$ discussed in Section \ref{sec:local} form a sequence of graphs with fractional chromatic number $\alpha$ and chromatic number going to infinity as $m \to \infty$. In this section, we will investigate the asymptotic behavior of certain cover numbers of these two graph families. In addition to the motivating example of $c_{\{\chi_f \leq \beta\}}$, we will examine analogous questions for $c_{\{\chi \leq \beta\}}$, $c_{\{\psi \leq \beta\}}$ and $c_{\{\omega \leq \beta\}}$.

The following eight questions can be asked regarding the asymptotic behavior of the cover numbers of the aforementioned families of graphs.

\begin{minipage}{\textwidth} 
\begin{question} \label{que:eight}
	Let $\alpha > \beta > 2$ be rational numbers.
	\begin{figure}[H]
	  \centering
	  \begin{subfigure}[b]{.4\linewidth}
	    \begin{equation}
	      \lim_{k \to \infty} c_{\{\omega \leq \beta\}}(KG(\alpha k, k)) \stackrel{?}{=} \infty
	    \end{equation}
	    \label{que:kg-by-omega}
	  \end{subfigure}
	  \begin{subfigure}[b]{.4\linewidth}
	    \begin{equation}
	      \lim_{m \to \infty} c_{\{\omega \leq \beta\}}(U(m, \alpha)) \stackrel{?}{=} \infty
	    \end{equation}
	    \label{que:umr-by-omega}
	  \end{subfigure}
	  \begin{subfigure}[b]{.4\linewidth}
	  	    \begin{equation}
	  	      \lim_{k \to \infty} c_{\{\chi_f \leq \beta\}}(KG(\alpha k, k)) \stackrel{?}{=} \infty
	  	    \end{equation}
	  	    \label{que:kg-by-chif}
	  \end{subfigure}
	  \begin{subfigure}[b]{.4\linewidth}
	  	    \begin{equation}
	  	      \lim_{m \to \infty} c_{\{\chi_f \leq \beta\}}(U(m, \alpha)) \stackrel{?}{=} \infty
	  	    \end{equation}
	  	    \label{que:umr-by-chif}
	  \end{subfigure}
	  \begin{subfigure}[b]{.4\linewidth}
	  	    \begin{equation}
	  	      \lim_{k \to \infty} c_{\{\psi \leq \beta\}}(KG(\alpha k, k)) \stackrel{?}{=} \infty
	  	    \end{equation}
	  	    \label{que:kg-by-psi}
	  \end{subfigure}
	  \begin{subfigure}[b]{.4\linewidth}
	  	    \begin{equation}
	  	      \lim_{m \to \infty} c_{\{\psi \leq \beta\}}(U(m, \alpha)) \stackrel{?}{=} \infty
	  	    \end{equation}
	  	    \label{que:umr-by-psi}
	  \end{subfigure}
	  \begin{subfigure}[b]{.4\linewidth}
	  	    \begin{equation}
	  	      \lim_{k \to \infty} c_{\{\chi \leq \beta\}}(KG(\alpha k, k)) \stackrel{?}{=} \infty
	  	    \end{equation}
	  	    \label{que:kg-by-chi}
	  \end{subfigure}
	  \begin{subfigure}[b]{.4\linewidth}
	  	    \begin{equation}
	  	      \lim_{m \to \infty} c_{\{\chi \leq \beta\}}(U(m, \alpha)) \stackrel{?}{=} \infty
	  	    \end{equation}
	  	    \label{que:umr-by-chi}
	  \end{subfigure}
	 \label{fig:8questions}
	\end{figure}
\end{question}
\end{minipage}

Note the arrangement of the questions. Each question implies all those below it, because of the fact that $\omega(G) \leq \chi_f(G) \leq \psi(G) \leq \chi(G)$ for all graphs, and Theorem \ref{thm:coverno-bijection}. Each of the even-numbered questions further implies the odd-numbered one next to it by Theorem \ref{thm:graph-class-defd-by-homomorphism}, since for all $m$, $U(m,\alpha) \to KG(\alpha k, k)$ for some $k$ dependent on $m$.

The answers to Questions (7) and (8) are readily answered by the Harary-Hsu-Miller theorem (Theorem \ref{thm:hhm}); both of these statements are clearly true, as the chromatic numbers of $KG(\alpha k, k)$ and $U(m, \alpha)$ both tend to infinity, as per Theorems \ref{thm:lovasz-kneser} and \ref{thm:erdos-umr-chromat} respectively. Also, due to Theorem \ref{thm:local-upper-general}, we can see that the answer to Question (6) is negative. This further implies that (4) and (2) must also be false.

Questions (1), (3) and (5) remain unresolved. Question (1) can be rephrased as a Ramsey-theoretic question of independent interest:

\begin{question}
	Let $c \geq 2$ and $n \geq 2$ be integers and $\alpha > n$ be a rational number. Is it true that for large enough $k$, all colorings of the edges of $KG(\alpha k, k)$ with $c$ colors will necessarily contain a monochromatic $K_n$?
\end{question}

\section{Open problems}





The following questions remain regarding the general theory.

\begin{enumerate}
	\item Resolve the remaining three questions out of the eight in Question \ref{que:eight}.
	\item Find a sequence of graphs attaining the upper bound of Theorem \ref{thm:frac-upper-lower}. (Such a sequence only exists if the answer to Question \ref{que:eight}, part (3) is positive.)
	\item For each integer $k \geq 3$, give a general lower bound for $c_{\{\psi \leq k\}}$ in terms of $\psi$. 
\end{enumerate}

Regarding small, specific cases of Kneser graph coverings, the following open questions remain.

\begin{enumerate}
	\item Is it possible to cover $KG(8,2)$ by two $2.5$-colorable graphs?
	\item What is the smallest $\beta \in \Q$ such that $KG(7,2)$ (resp. $KG(8,2)$) can be covered by two $\beta$-fractional-colorable graphs? We know that the optimal $\beta$ satisfies $2 < \beta \leq 3$ in both cases.
	\item Is it possible to cover $KG(11,3)$ (resp. $KG(12,3)$) by two $2.5$-colorable graphs? (This question has already been asked for triangle-free coverings in \cite{Heath&al2025})
\end{enumerate}

\section{Acknowledgements}

Anna Gujgiczer was supported by the National Research, Development and Innovation Office NKFIH (Excellence program, Grant Nr. 153829). Kenta Ozeki was supported by JSPS KAKENHI Grant Numbers 22K19773, 23K03195 and 26K00616.


\end{document}

%% file: figure1.tex
\coordinate (A1) at (-.951,.309);
			\coordinate (A2) at (-.588,-.809);
			\coordinate (A3) at (.588,-.809);
			\coordinate (A4) at (.951,.309);
			\coordinate (A5) at (0,1);

			\coordinate (B1) at (-1.902,.618);
			\coordinate (B2) at (-1.176,-1.618);
			\coordinate (B3) at (1.176,-1.618);
			\coordinate (B4) at (1.902,.618);
			\coordinate (B5) at (0,2);

			\coordinate (AA1) at ($(4,2)+(170:.8)$);
			\coordinate (AA2) at ($(4,2)+(120:.3)$);
			\coordinate (AA3) at ($(4,2)+(-77:.6)$);
			
			\coordinate (BB1) at ($(4,-2)+(20:.9)$);
			\coordinate (BB2) at ($(4,-2)+(120:.3)$);
			\coordinate (BB3) at ($(4,-2)+(-77:.2)$);

			\draw[thick,blue] (AA1) -- (BB1);
			\draw[thick,blue] (AA2) -- (BB2);
			\draw[thick,blue] (AA3) -- (BB3);
			\draw[thick,blue] (AA1) -- (BB2);
			\draw[thick,blue] (AA3) -- (BB1);

			\draw[thick,red] (AA1) -- (B1);
			\draw[thick,red] (AA2) -- (A2);
			\draw[thick,red] (A3) -- (BB3);
			\draw[thick,red] (A1) -- (BB3);
			\draw[thick,red] (AA3) -- (B2);
			\draw[thick,red] (AA3) -- (B1);
			\draw[thick,red] (AA2) -- (B4);
			\draw[thick,red] (B5) -- (BB3);
			\draw[thick,red] (B2) -- (BB3);
			\draw[thick,red] (AA3) -- (B5);
			
			\draw[fill=white] (0,0) circle (2.3) node at (135:2.5) {$H$};
			\draw[fill=white] (4,2) circle (1.1) node at ($(4,2)+(45:1.3)$) {$A$};
			\draw[fill=white] (4,-2) circle (1.1) node at ($(4,-2)+(-45:1.3)$) {$B$};
			
			\draw[thick,blue] (A1) -- (A3);
			\draw[thick,blue] (A4) -- (A2);
			\draw[thick,blue] (A3) -- (A5);
			\draw[thick,blue] (A1) -- (A4);
			\draw[thick,blue] (A5) -- (A2);
			\draw[thick,blue] (B1) -- (B2);
			\draw[thick,blue] (B2) -- (B3);
			\draw[thick,blue] (B3) -- (B4);
			\draw[thick,blue] (B4) -- (B5);
			\draw[thick,blue] (B5) -- (B1);
			\draw[thick,blue] (A1) -- (B1);
			\draw[thick,blue] (A2) -- (B2);
			\draw[thick,blue] (A3) -- (B3);
			\draw[thick,blue] (A4) -- (B4);
			\draw[thick,blue] (A5) -- (B5);

%% file: figure2.tex
			\begin{scope}[shift={(0,2)}]
						\coordinate (A1) at (-.951,.309);
						\coordinate (A2) at (-.588,-.809);
						\coordinate (A3) at (.588,-.809);
						\coordinate (A4) at (.951,.309);
						\coordinate (A5) at (0,1);

						\coordinate (B1) at (-1.902,.618);
						\coordinate (B2) at (-1.176,-1.618);
						\coordinate (B3) at (1.176,-1.618);
						\coordinate (B4) at (1.902,.618);
						\coordinate (B5) at (0,2);
			\end{scope}

			\coordinate (A11) at ($(6,0)+(90:2) + (-50:.25)$);
			\coordinate (A12) at ($(6,0)+(90:2) + (50:.2)$);
			\coordinate (A13) at ($(6,0)+(90:2) + (133:.1)$);
			\coordinate (A21) at ($(6,0)+(162:2) + (-5:.4)$);
			\coordinate (A22) at ($(6,0)+(162:2) + (70:.2)$);
			\coordinate (A23) at ($(6,0)+(162:2) + (123:.25)$);
			\coordinate (A31) at ($(6,0)+(-126:2) + (166:.4)$);
			\coordinate (A32) at ($(6,0)+(-126:2) + (-98:.3)$);
			\coordinate (A33) at ($(6,0)+(-126:2) + (-66:.33)$);
			\coordinate (A34) at ($(6,0)+(-126:2) + (20:.2)$);
			\coordinate (A41) at ($(6,0)+(-54:2) + (133:.25)$);
			\coordinate (A42) at ($(6,0)+(-54:2) + (-77:.4)$);
			\coordinate (A43) at ($(6,0)+(-54:2) + (1:.47)$);
			\coordinate (A51) at ($(6,0)+(18:2) + (-50:.25)$);
			\coordinate (A52) at ($(6,0)+(18:2) + (166:.3)$);
			\coordinate (A53) at ($(6,0)+(18:2) + (-44:.1)$);
			
			\coordinate (N1) at ($(-1,-2)$);
			\coordinate (N2) at ($(1,-2)$);
			
			\coordinate (N11) at ($(N1)+(.1,.5)$);
			\coordinate (N12) at ($(N1)+(-.4,.2)$);
			\coordinate (N13) at ($(N1)+(.4,-.4)$);
			
			\coordinate (N21) at ($(N2)+(.1,.5)$);
			\coordinate (N22) at ($(N2)+(-.4,.4)$);
			\coordinate (N23) at ($(N2)+(.4,-.5)$);

			\draw[thick,orange] (A1) -- (A31);
			\draw[thick,orange] (A4) -- (A42);
			\draw[thick,orange] (A5) -- (A13);
			\draw[thick,orange] (A3) -- (A22);
			\draw[thick,orange] (A4) -- (A34);
			\draw[thick,orange] (B1) -- (A52);
			\draw[thick,orange] (B3) -- (A51);
			\draw[thick,orange] (B2) -- (A43);
			\draw[thick,orange] (B1) -- (A33);
			\draw[thick,orange] (B4) -- (A21);
			\draw[thick,orange] (A1) -- (A32);
			
			\draw[thick,orange] (N21) -- (A31);
			\draw[thick,orange] (N22) -- (A42);
			\draw[thick,orange] (N21) -- (A13);
			\draw[thick,orange] (N23) -- (A22);
			\draw[thick,orange] (N22) -- (A34);
			\draw[thick,orange] (N21) -- (A52);
			
			\draw[fill=white,draw=white] (6,0) circle (2.3) node at (135:2.5) {$H$};
			
			\draw[thick,blue] (A11) -- (A21);
			\draw[thick,blue] (A11) -- (A22);
			\draw[thick,blue] (A23) -- (A31);
			\draw[thick,blue] (A21) -- (A31);
			\draw[thick,blue] (A22) -- (A33);
			\draw[thick,blue] (A34) -- (A41);
			\draw[thick,blue] (A33) -- (A42);
			\draw[thick,blue] (A32) -- (A41);
			\draw[thick,blue] (A41) -- (A52);
			\draw[thick,blue] (A42) -- (A51);
			\draw[thick,blue] (A51) -- (A11);
			\draw[thick,blue] (A53) -- (A11);
			\draw[thick,blue] (A52) -- (A12);
			
			\draw[thick,blue] (N12) -- (N22);
			\draw[thick,blue] (N11) -- (N23);
			\draw[thick,blue] (N11) -- (N22);
			\draw[thick,blue] (N12) -- (N23);
			\draw[thick,blue] (N13) -- (N21);
			\draw[thick,blue] (N13) -- (N13);
			
			\draw[thick,red] (A1) -- (N22);
			\draw[thick,red] (N11) -- (B3);
			\draw[thick,red] (A4) -- (N22);
			\draw[thick,red] (N12) -- (B2);
			\draw[thick,red] (A2) -- (N21);
			\draw[thick,red] (N13) -- (B5);
			\draw[thick,red] (B2) -- (N22);
			\draw[thick,red] (N11) -- (A3);
			\draw[thick,red] (B5) -- (N22);
			\draw[thick,red] (N12) -- (A2);
			\draw[thick,red] (B1) -- (N21);
			\draw[thick,red] (N13) -- (A1);			

			\draw[thick,red] (A11) -- (A31);
			\draw[thick,red] (A11) -- (A32);
			\draw[thick,red] (A23) -- (A41);
			\draw[thick,red] (A21) -- (A41);
			\draw[thick,red] (A22) -- (A43);
			\draw[thick,red] (A34) -- (A51);
			\draw[thick,red] (A33) -- (A52);
			\draw[thick,red] (A32) -- (A51);
			\draw[thick,red] (A41) -- (A12);
			\draw[thick,red] (A42) -- (A11);
			\draw[thick,red] (A51) -- (A21);
			\draw[thick,red] (A53) -- (A21);
			\draw[thick,red] (A52) -- (A22);

			\begin{scope}[shift={(0,2)}]
				\draw[fill=white] (0,0) circle (2.3) node at (135:2.5) {$H$};
			\end{scope}

			\draw[fill=white] ($(6,0)+(90:2)$) circle (.5) node at ($(6,0)+(90:2)+(90:.7)$) {$A_1$};
			\draw[fill=white] ($(6,0)+(162:2)$) circle (.5) node at ($(6,0)+(162:2)+(162:.7)$) {$A_2$};
			\draw[fill=white] ($(6,0)+(-126:2)$) circle (.5) node at ($(6,0)+(-126:2)+(-126:.7)$) {$A_3$};
			\draw[fill=white] ($(6,0)+(-54:2)$) circle (.5) node at ($(6,0)+(-54:2)+(-54:.7)$) {$A_4$};
			\draw[fill=white] ($(6,0)+(18:2)$) circle (.5) node at ($(6,0)+(18:2)+(18:.7)$) {$A_5$};
			
			\draw[fill=white] ($(N1)$) circle (.8) node at ($(N1)+(-126:1.2)$) {$B_1$};
			\draw[fill=white] ($(N2)$) circle (.8) node at ($(N2)+(-54:1.2)$) {$B_2$};

			\draw[thick,blue] (A1) -- (A3);
			\draw[thick,blue] (A4) -- (A2);
			\draw[thick,blue] (A3) -- (A5);
			\draw[thick,blue] (A1) -- (A4);
			\draw[thick,blue] (A5) -- (A2);
			\draw[thick,blue] (B1) -- (B2);
			\draw[thick,blue] (B2) -- (B3);
			\draw[thick,blue] (B3) -- (B4);
			\draw[thick,blue] (B4) -- (B5);
			\draw[thick,blue] (B5) -- (B1);
			\draw[thick,blue] (A1) -- (B1);
			\draw[thick,blue] (A2) -- (B2);
			\draw[thick,blue] (A3) -- (B3);
			\draw[thick,blue] (A4) -- (B4);
			\draw[thick,blue] (A5) -- (B5);